\newcommand{\Reg}{\mathsf{R}}
\newcommand{\Sing}{\mathsf{S}}
\newcommand{\bD}{{\mathcal D}}
\newcommand{\mup}{\hat \mu^p}
\newcommand{\muq}{\hat \mu^q}
\newcommand{\ps}{\langle\cdot,\cdot\rangle}
\renewcommand{\span}{\mathrm{span}}
\newtheorem{theorem}{Theorem}[section]
\newtheorem{corollary}[theorem]{Corollary}
\newtheorem{lemma}[theorem]{Lemma}
\newtheorem{proposition}[theorem]{Proposition}
\theoremstyle{definition}
\newtheorem{definition}[theorem]{Definition}
\theoremstyle{remark}
\newtheorem{remark}[theorem]{Remark}
\newtheorem{example}[theorem]{Example}
\newcommand{\bt}{\begin{theorem}}
\newcommand{\et}{\end{theorem}}
\newcommand{\bl}{\begin{lemma}}
\newcommand{\el}{\end{lemma}}
\newcommand{\bp}{\begin{proposition}}
\newcommand{\ep}{\end{proposition}}
\newcommand{\bc}{\begin{corollary}}
\newcommand{\ec}{\end{corollary}}
\newcommand{\bdeff}{\begin{definition}}
\newcommand{\edeff}{\end{definition}}
\newcommand{\brem}{\begin{remark}}
\newcommand{\erem}{\end{remark}}
\renewcommand{\r}[1]{(\ref{#1})}
\def\diam{\mathop{\mathrm{diam}}}
\def\ss{\mathcal{S}}
\newcommand{\hh}{{\mathcal H}}
\newcommand{\bi}{\begin{itemize}}
\newcommand{\iii}{\item}
\newcommand{\ei}{\end{itemize}}
\newcommand{\bd}{\begin{description}}
\newcommand{\ed}{\end{description}}
\newcommand{\bqn}{\begin{eqnarray}}
\newcommand{\eqn}{\end{eqnarray}}
\newcommand{\eqnn}{\nonumber\end{eqnarray}}
\newcommand{\ba}[1]{\begin{array}{#1}}
\newcommand{\ea}{\end{array}}
\newcommand{\R}{\mathbb{R}}
\newcommand{\N}{\mathbb{N}}
\newcommand{\g}{\gamma}
\newcommand{\eps}{\epsilon}
\newcommand{\VecM}{\mathrm{Vec}(M)}
\title{On measures in sub-Riemannian geometry}
\author[R.~Ghezzi]{Roberta Ghezzi}
\address{Institut de Math\'ematiques de Bourgogne UBFC, 9 Avenue Alain Savary BP47870 21078 Dijon Cedex France}
\email{roberta.ghezzi@u-bourgogne.fr}
\author[F.~Jean]{Fr\'{e}d\'{e}ric Jean}
\address{Unit\'{e} de Math\'{e}matiques Appliqu\'{e}es, ENSTA ParisTech, Universit\'{e} Paris-Saclay,
F-91120 Palaiseau, France}
\email{frederic.jean@ensta-paristech.fr}
\thanks{This work was partially supported by iCODE (Institute for Control and Decision),
research project of the IDEX Paris--Saclay, by the ANR project SRGI ``Sub-
Riemannian Geometry and Interactions'', contract number ANR-15-CE40-0018, and by a public grant as part of the
Investissement d'avenir project, reference ANR-11-LABX-0056-LMH, LabEx
LMH, in a joint call with Programme Gaspard Monge en Optimisation et
Recherche Op\'{e}rationnelle.}
\begin{document}

\begin{abstract}
In \cite{gjha} we give a detailed analysis of spherical Hausdorff measures on sub-Riemannian manifolds in a general framework, that is, without the assumption of equiregularity. The present paper is devised as a complement of this analysis, with both new results and open questions.
\end{abstract}

\maketitle

\tableofcontents

\section{Introduction}

\subsection{Objectives and structure of the paper}
In \cite{gjha} we give a detailed analysis of spherical Hausdorff measures on sub-Riemannian manifolds in a general framework, that is, without the assumption of equiregularity. The present paper is devised as a complement of this analysis, with both new results and open questions. The first aim is to extend the study to other kinds of intrinsic measures on sub-Riemannian manifolds, namely Popp's measure and general (i.e., non spherical) Hausdorff measures. The second is to explore some consequences of \cite{gjha} on metric measure spaces based on sub-Riemannian manifolds.

We choose to give first in this introduction a readable and synthetic presentation in the form of an informal discussion. We then provide all general definitions in Section~\ref{se:notations}. We study in Section~\ref{se:mgh} measured Gromov--Hausdorff  convergence in sub-Riemannian geometry and we state open questions on the behaviour of general  Hausdorff measures. Finally
Section~\ref{se:popp} contains the results on Popp's measure in the presence of singular points.

\subsection{Setting}
Let $(M, \bD, g)$ be a sub-Riemannian manifold: $M$ is a
smooth manifold, $\bD$ a Lie-bracket generating distribution on $M$  and $g$ a   Riemannian metric on $\bD$ (note that our framework  will permit us to consider rank-varying distributions as well). As in Riemannian geometry,  one defines the length of
  absolutely continuous paths which are
almost everywhere tangent to $\bD$   by integrating the $g$-norm of
their tangent vectors.  Then, the sub-Riemannian
distance $d$ is defined as the infimum of length of paths between two given points. The Lie-bracket generating assumption implies that, for every point $p\in M$ there exists $r(p)\in\N$ such that
\begin{equation}\label{flagintro}
\{0\}=\bD^0_p\subset\bD_p^1\subset\dots\subset\bD_p^{r(p)}=T_pM,
\end{equation}
where $\bD^i_p=\{X(p)\mid X\in\bD^i\}$ and $\bD^i\subset\VecM$ is the submodule defined recursively  by
  $\bD^{1}=\bD$, $\bD^{i+1}=\bD^{i}+[\bD,\bD^{i}]$. A point $p$ is \emph{regular}  if for every integer $i$ the dimension $\dim\bD^i_q$ is locally constant near $p$. Otherwise, $p$ is said to be \emph{singular}. Finally, for $p \in M$ we set
  \begin{equation*}
  Q(p)=\sum_{i=1}^{r(p)}i(\dim\bD^i_p-\dim\bD^{i-1}_p).
  \end{equation*}

 We first  discuss properties of Hausdorff measures near regular points; then  we give some constructions and estimates of Popp's measures in the presence of singular points (i.e.\ in non equiregular manifolds).\medskip

\subsection{On Hausdorff measures near regular points}
\label{se:haus_intro}
Let us recall some results on the spherical Hausdorff measures in sub-Riemannian geometry (see Theorem~3.1 and Proposition~5.1(iv) in \cite{gjha}).
Let $U$ be a connected component of the set $\Reg$ of regular points (which is an open subset of $M$).
\begin{enumerate}[(i)]
  \item The Hausdorff dimension of $U$ is $\dim_H U=Q$, where $Q$ is the constant value of $Q(p)$ for $p \in U$.
  \item The spherical Hausdorff measure $\ss^{Q}$ is a Radon measure on $U$.
\end{enumerate}
Assume moreover $M$  to be oriented and consider a smooth volume $\mu$  on $M$, i.e., a measure defined on open sets by $\mu(A)=\int_A \omega$, where $\omega\in\Lambda^n M$ is a positively oriented non degenerate $n$-form.
\footnote{Orientation is needed here to have a globally defined non degenerate $n$-form. However without this hypothesis all our results could be stated locally with a locally defined non degenerate $n$-form.}

\begin{enumerate}[(i)]
\setcounter{enumi}{2}
  \item $\ss^{Q}$ and $\mu$ are mutually absolutely continuous.
  \item The Radon--Nikodym derivative  $\frac{d\ss^{Q}}{d\mu}(p)$, $p \in U$, coincides with  the density $\lim_{\eps\to 0}\frac{\ss^{Q}( B(p,\eps))}{\mu(B(p,\eps))}$, whose value is
\begin{equation}\label{densintro}
\lim_{\eps \to 0}\frac{\ss^{Q}(B(p,\eps))}{\mu(B(p,\eps))}=\frac{2^Q}{\mup(\widehat B_p)},
\end{equation}
where $B(p,\eps)$ is the sub-Riemannian ball centered at $p$ of radius $\eps$, $\widehat B_p$ is the unit ball of the nilpotent approximation $(T_pM,\widehat d_p)$ at $p$, and $\mup$ is a measure on $T_pM$ obtained through a blow-up procedure of $\mu$ at $p$.
 \item The function $p \mapsto \frac{d\ss^{Q}}{d\mu}(p)$ is continuous on $U$.
\end{enumerate}

For the usual Hausdorff measure $\hh^{Q}$, since $\hh^Q \leq\ss^Q \leq 2^Q \hh^Q$, properties (ii)-(iv) hold: $\hh^{Q}$ is a Radon measure on $U$, $\hh^{Q}$ and $\mu$ are mutually absolutely continuous, and
\begin{equation*}
\frac{d\hh^{Q}}{d\mu}(p)=\lim_{\eps\to 0}\frac{\hh^{Q}( B(p,\eps))}{\mu(B(p,\eps))}.
\end{equation*}
However we have no formula such as \eqref{densintro} for this density. We would like to discuss here the different interpretations and implications of \eqref{densintro} and its potential extensions to $\hh^Q$.

\paragraph{\emph{Measured Gromov--Hausdorff convergence.}}
Note that $2^Q=\ss_{\widehat d_p}^Q (\widehat B_p)$, where $\ss_{\widehat d_p}^Q$ is the $Q$-dimensional spherical Hausdorff measure of the metric space $(T_pM,\widehat d_p)$. Thus \eqref{densintro} writes as
\begin{equation*}
\lim_{\eps \to 0}\frac{\ss^{Q}(B(p,\eps))}{\mu(B(p,\eps))}=\frac{\ss_{\widehat d_p}^Q (\widehat B_p)}{\mup(\widehat B_p)}.
\end{equation*}
Knowing that $(T_pM,\widehat d_p)$ is the metric tangent cone to $(M,d)$ at $p$, the above formula suggests a kind of convergence of the measures $\ss^{Q}$ and $\mu$ to $\ss_{\widehat d_p}^Q$ and $\mup$ respectively through a blow-up procedure. The appropriate notion is the one of measured Gromov--Hausdorff convergence of pointed metric measure spaces (see Definition~\ref{mgh}), and we actually prove in Section~\ref{se:mghS} the following results.

\begin{theorem}
\begin{itemize}
  \item For every $p\in M$, $(M,\frac1\eps d,\frac{1}{\eps^{Q(p)}}\mu, p)$ converges to $(T_pM,\widehat d_p,\mup,0)$ in the measured Gromov--Hausdorff sense as $\eps\to 0$.
  \item For every regular point $p\in M$, $(M,\frac1\eps d,\ss^Q_{d/\eps}, p)$ converges to $(T_pM,\widehat d_p,\ss_{\widehat d_p}^Q,0)$ in the measured Gromov--Hausdorff sense as $\eps\to 0$, where $Q=Q(p)$ and $\ss^Q_{d/\eps}$ denotes the $Q$-dimensional spherical Hausdorff measure on $M$ associated with the distance $\frac1\eps d$.
\end{itemize}
\end{theorem}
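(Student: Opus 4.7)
The plan is to prove both parts by constructing a single explicit family of Gromov--Hausdorff approximations between $(M,\tfrac1\eps d,p)$ and $(T_pM,\widehat d_p,0)$, and then checking weak convergence of the rescaled measures pushed forward along this family. Fix a system of privileged coordinates $\phi_p\colon U\to T_pM$ at $p$ and set
\[
g_\eps:=\delta_{1/\eps}\circ\phi_p\colon U\to T_pM,
\]
where $\delta_s$ is the nilpotent dilation on $T_pM$. The ball-box theorem together with the uniform estimate $d(q,q')=\widehat d_p(\phi_p(q),\phi_p(q'))(1+o(1))$ as $q,q'\to p$ implies that $g_\eps$ realises an $\eps$-isometry between the two pointed spaces on every fixed ball centered at the base point, which gives the underlying pointed GH convergence.

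For the first part, write $(\phi_p)_*\mu=\rho(x)\,dx$ in the privileged chart, with $\rho$ continuous near $0$ and $\rho(0)>0$. Since the dilation $\delta_\eps$ acts with homogeneity weights summing to $Q(p)$, a direct change of variables yields
\[
\eps^{-Q(p)}(g_\eps)_*\mu=\rho(\delta_\eps v)\,dv\ \xrightarrow[\eps\to 0]{}\ \rho(0)\,dv,
\]
with weak convergence on every compact set of $T_pM$. By the very construction of the blow-up, $\mup=\rho(0)\,dv$, which gives the measured GH convergence claimed in the first item.

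For the second part we reduce to the first one using continuity of $h:=d\ss^Q/d\mu$. First observe the elementary scaling $\ss^Q_{d/\eps}=\eps^{-Q}\ss^Q$ (the $Q$-dimensional spherical Hausdorff measure scales as the $Q$-th power of the distance). Since $p\in U$ is regular, $\ss^Q=h\cdot\mu$ with $h$ continuous at $p$, so
\[
(g_\eps)_*\ss^Q_{d/\eps}=(h\circ g_\eps^{-1})\cdot\eps^{-Q}(g_\eps)_*\mu.
\]
For each fixed $v$, $g_\eps^{-1}(v)=\phi_p^{-1}(\delta_\eps v)\to p$, so by continuity $h\circ g_\eps^{-1}\to h(p)=2^Q/\mup(\widehat B_p)$ locally uniformly on $T_pM$ (the last value coming from \eqref{densintro}). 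Combined with the first part, this yields weak convergence on compact sets of $(g_\eps)_*\ss^Q_{d/\eps}$ to $h(p)\mup$. It remains to identify $h(p)\mup$ with $\ss^Q_{\widehat d_p}$: both measures are left-invariant and $Q$-homogeneous under $\delta_s$ on the nilpotent group $T_pM$, hence proportional by uniqueness of Haar measure; both assign mass $2^Q$ to $\widehat B_p$ (by the choice of $h(p)$ on the one side and by the general identity $\ss^Q_{\widehat d_p}(\widehat B_p)=2^Q$ on the other), so they coincide.

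The main obstacle is ensuring that the \emph{same} family $g_\eps$ realises the metric and the measure convergence simultaneously, in the precise sense of Definition~\ref{mgh}. The metric side is the classical result of Mitchell/Bella\"{\i}che; the measure side requires uniform control of the coordinate representation of $\mu$ on a full neighborhood of $p$, in order to pass to weak convergence on every fixed compact set of $(T_pM,\widehat d_p)$. Once this is granted, the second part follows by combining the first with the continuity of $d\ss^Q/d\mu$ provided by property (v), and the identification of the limit with $\ss^Q_{\widehat d_p}$ is a standard Haar-measure argument on the nilpotent tangent cone.
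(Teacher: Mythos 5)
Your proposal is correct and follows essentially the same route as the paper: privileged coordinates with $f_\eps=\delta_{1/\eps}$ as the Gromov--Hausdorff approximations (conditions a)--c) from Bella\"{\i}che's estimates), condition d) for $\mu$ by a change of variables and continuity of the coordinate density at $0$, and for $\ss^Q$ by the scaling identity $\ss^Q_{d/\eps}=\eps^{-Q}\ss^Q$, the continuity of $d\ss^Q/d\mu$, and the final identification of $h(p)\mup$ with $\ss^Q_{\widehat d_p}$ via uniqueness of Haar measure on the Carnot tangent group. The only cosmetic difference is your multiplicative form of the distance comparison, where the paper uses the additive uniform estimate $\sup|d(x,x')/\eps-\widehat d_p(f_\eps(x),f_\eps(x'))|\to 0$; this does not affect the argument.
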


The key point in the proof of the second result is the fact that the Radon--Nikodym derivative  $\frac{d\ss^{Q}}{d\mu}(q)$ depends continuously on $q$ near a regular point $p$. This raises a first question:\medskip

\noindent \textbf{Question 1.} \ \emph{Is the function $q \mapsto \frac{d\hh^{Q}}{d\mu}(q)$ continuous near a regular point?}

\noindent If it is the case, then, when $p$ is a regular point, $(M,\frac1\eps d,\hh^Q_{d/\eps}, p)$ admits a limit for the measured Gromov--Hausdorff convergence as $\eps\to 0$.

\paragraph{\emph{Isodiametric constant.}}
In point (iv) above we obtain the Radon--Nikodym derivative  $\frac{d\ss^{Q}}{d\mu}(p)$ through the usual density $\lim_{\eps\to 0}\frac{\ss^{Q}( B(p,\eps))}{\mu(B(p,\eps))}$ (this is possible since both measures are Radon). We could rather use Federer densities, which yields the formula (see \cite[2.10.17(2),2.10.18(1)]{federer} or \cite[Theorem 11]{Magnani2015}):
\begin{equation*}
\frac{d\mu}{d\ss^{Q}}(p)= \lim_{\eps \to 0} \sup \left\{ \frac{\mu(B)}{(\mathrm{diam} B)^{Q}} \ : \ B \hbox{ closed ball}, \  p \in B, \  0< \mathrm{diam} B < \eps  \right\}.
\end{equation*}
Note that for closed balls $B=\overline{B}(q,r)$, we have
\begin{equation*}
\frac{\mu(\overline{B}(q,r))}{(\mathrm{diam} \overline{B}(q,r))^{Q}} = \frac{\mu(\overline{B}(q,r))}{(2r)^{Q}} \to \frac{\muq(\widehat B_q)}{2^Q} \hbox{ as }r \to 0,
\end{equation*}
and, by \cite[Proposition~5.1(iv)]{gjha}, in the neighbourhood of a regular point the convergence above is uniform w.r.t.\ $q$ and the limit is continuous. We recover in this way \eqref{densintro} (actually the proof of the latter formula in \cite{gjha} already used these properties of uniform convergence and continuity).

We can apply the same strategy to the usual Hausdorff measure. Using Federer densities (see \cite[2.10.17(2),2.10.18(1)]{federer},  or \cite[Theorem 10]{Magnani2015}), we obtain
\begin{equation*}
\frac{d\mu}{d\hh^{Q}}(p)= \lim_{\eps \to 0} \sup \left\{ \frac{\mu(S)}{(\mathrm{diam} S)^{Q}} \ : \ S \hbox{ closed subset of }M, \  p \in S, \  0< \mathrm{diam} S < \eps  \right\}.
\end{equation*}
This formula is  interesting when applied to $\mu=\ss^Q$. In that case it takes the form
\begin{equation*}
\frac{d\ss^Q}{d\hh^{Q}}(p)= \lim_{\eps \to 0} {\mathscr I}(p,\eps), \ \hbox{with } {\mathscr I}(p,\eps)=\sup \left\{ \frac{\ss^Q(S)}{(\mathrm{diam} S)^{Q}} \ : \ S \hbox{ closed subset of }M, \  p \in S, \  0< \mathrm{diam} S < \eps  \right\}.
\end{equation*}
When the sub-Riemannian manifold has a structure of a Carnot group, ${\mathscr I}:={\mathscr I}(p,\eps)$ is independent of $p$ and $\eps$ and it is called the \emph{isodiametric constant} of the group. In particular, at a regular point $p$, the nilpotent approximation $(T_pM,\widehat d_p)$ has a structure of a Carnot group. We will denote by ${\mathscr I}_p$ its isodiametric constant and it turns out that ${\mathscr I}_p$ is the multiplicative factor relating $Q$-dimensional Hausdorff and spherical Hausdorff measures in the Carnot group, i.e., there holds
$\ss^Q_{\widehat d_p}={\mathscr I}_p \hh^Q_{\widehat d_p}$ (recall that $Q=Q(p)$).
This raises a second question:\medskip

\noindent \textbf{Question 2.} \ \emph{Let $p$ be a regular point. Does ${\mathscr I}(p,\eps)$ converge to ${\mathscr I}_p$ as $\eps \to 0$?}\medskip

\noindent When the answer is positive, we obtain a formula similar to \eqref{densintro} for the $Q$-dimensional Hausdorff measure, since its Radon--Nikodym derivative w.r.t.\ a smooth volume $\mu$ at a regular point $p$ is given by
\begin{equation*}
\frac{d\hh^{Q}}{d\mu}(p)= \frac{2^Q}{{\mathscr I}_p \, \mup(\widehat B_p)}.
\end{equation*}
In that case, Question 1 above amounts to the following problem:\medskip

\noindent \textbf{Question 3.} \ \emph{Is the isodiametric constant ${\mathscr I}_q$ of the nilpotent approximation at $q$ continuous w.r.t.\ $q$  near a regular point?}\medskip

\noindent A positive answer to both questions 2 and 3 would imply the convergence in the measured Gromov--Hausdorff sense of $(M,\frac1\eps d,\hh^Q_{d/\eps}, p)$  to $(T_pM,\widehat d_p,\hh_{\widehat d_p}^Q,0)$ when $p$ is a regular point. All these issues are discussed in detail in Section~\ref{se:mgh}.

\subsection{On Popp's measure in the presence of singular points}

Popp's measure is a smooth volume on sub-Riemannian manifolds defined near regular points that is intrinsically associated to the sub-Riemannian structure. It was introduced first in \cite{montgomery} and then used in \cite{ABGR} to define an intrinsic Laplacian in the sub-Riemannian setting. An explicit formula for Popp's measure in terms of adapted frames is given in \cite{br}.

Popp's measure $\mathscr{P}$ is the smooth volume associated with a volume form which is built by a suitable choice of inner product structure on the graded vector space
  \begin{equation*}
\mathfrak{gr}_p(\bD)=\bD^1_p\oplus\bD^2_p/\bD^1_p\oplus\dots\oplus\bD^{r(p)}_p/\bD^{r(p)-1}_p.
\end{equation*}
This graded vector space depends smoothly on $p$  only near regular points, hence Popp's measure is defined only on the open set $\Reg$ of regular points. This raises two questions.\medskip

\noindent \textbf{Question 1.} How does $\mathscr{P}$ behave near singular points?\medskip

\noindent \textbf{Question 2.} Is it possible to extend the notion of Popp's measure to the whole manifold, including the set of singular points?\medskip

As for the first question, we prove that Popp's measure behaves as the spherical Hausdorff measure. More precisely, assume $(M, \bD, g)$ is an oriented sub-Riemannian manifold and fix a smooth volume $\mu$ on $M$. Consider an open connected subset $U$ of $\Reg$, denote by $\mathscr{P}^U$ the Popp measure on $U$ and by $Q$ the Hausdorff dimension of $U$.

\begin{theorem}\label{plop}
For any compact subset $K \subset M$ there exists a constant
$C>0$ such that, for every $q \in U \cap K$,
\begin{equation*}
C^{-1} \frac{d\ss^Q}{d\mu}(q)  \leq \frac{d\mathscr{P}^{U}}{d\mu}(q) \leq C \frac{d\ss^Q}{d\mu}(q).
\end{equation*}
\end{theorem}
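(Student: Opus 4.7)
The plan is to re-express both Radon--Nikodym derivatives via the blow-up / nilpotent-approximation construction and then reduce the comparison to a bounded geometric quantity on the nilpotent approximation. By the general blow-up principle, for any two smooth volumes $\nu_1,\nu_2$ on a neighborhood of $q\in U$ one has $\tfrac{d\nu_1}{d\nu_2}(q) = \hat\nu_1^q(E)/\hat\nu_2^q(E)$ for any bounded set $E\subset T_qM$ of positive volume. Taking $E=\widehat B_q$, $\nu_1=\mathscr{P}^U$, $\nu_2=\mu$ gives
\begin{equation*}
\frac{d\mathscr{P}^U}{d\mu}(q) \;=\; \frac{\hat{\mathscr{P}}^q(\widehat B_q)}{\hat\mu^q(\widehat B_q)},
\end{equation*}
where $\hat{\mathscr{P}}^q$ is the blow-up of Popp's measure at $q$. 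A direct computation in privileged coordinates, using the explicit formula for $\mathscr{P}$ in adapted frames from \cite{br} and the definition of the nilpotent approximation, identifies $\hat{\mathscr{P}}^q$ with Popp's measure of the Carnot group $(T_qM,\widehat d_q)$. Together with \eqref{densintro}, this yields the key identity
\begin{equation*}
\frac{d\mathscr{P}^U/d\mu(q)}{d\ss^Q/d\mu(q)} \;=\; \frac{\hat{\mathscr{P}}^q(\widehat B_q)}{2^Q},
\end{equation*}
which is independent of $\mu$. The theorem is thus equivalent to showing that $q\mapsto \hat{\mathscr{P}}^q(\widehat B_q)$ is bounded above and below by positive constants on $U\cap K$.

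I would establish this boundedness by localizing around each point $p\in K$. Choose a system of privileged coordinates at $p$ and a generating frame $X_1,\ldots,X_m$ of $\bD$ on a neighborhood $\Omega$ of $p$. On $U\cap\Omega$ one extends this to an adapted frame whose structure functions are smooth, and in this frame both Popp's measure and the nilpotent approximation are determined by finitely many jets of the vector fields at $q$; those jets depend continuously on $q\in\Omega$ and stay in a compact set. The continuous dependence of sub-Riemannian distances on the vector fields (via Gronwall-type estimates on Pontryagin extremals) then gives continuity of $\widehat B_q$ in the Hausdorff sense, and Popp's volume form, expressed in the privileged coordinates, has a continuous positive density as a function of $q$. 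Together these two facts yield uniform positive lower and upper bounds for $\hat{\mathscr{P}}^q(\widehat B_q)$ on $U\cap K\cap\Omega$; a finite cover of $K$ then concludes.

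The main obstacle is the behavior as $q$ approaches a singular point $p\in \partial U\cap K$: the growth vectors of the nilpotent approximations at $q\in U$ and at $p$ differ, so the family of Carnot groups $(T_qM,\widehat d_q)$ does not converge to the nilpotent approximation at $p$, and a priori one could fear that the Popp volume form of $(T_qM,\widehat d_q)$ degenerates. The crucial point making the plan work is that the inner products used in the definition of $\mathscr{P}^U$ are built from the graded space $\mathfrak{gr}_q(\bD)$, whose structure on $U$ depends only on the constant growth vector there, so that in privileged coordinates adapted to $p$ the density of $\mathscr{P}^U$ with respect to Lebesgue extends continuously and strictly positively up to the boundary of $U$. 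Verifying this continuity and non-degeneracy via the explicit expression of \cite{br} is the technical heart of the argument.
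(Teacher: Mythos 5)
Your reduction of the theorem to the two-sided bound $C^{-1}\leq \widehat{\mathscr{P}}_q(\widehat B_q)\leq C$ for $q\in U\cap K$ is correct, and the identity $\frac{d\mathscr{P}^U}{d\ss^Q}(q)=\widehat{\mathscr{P}}_q(\widehat B_q)/2^Q$ is indeed how the paper phrases things (this bound is exactly its Corollary on $\widehat{\mathscr{P}}_q(\widehat B_q)$). The gap is in how you propose to prove that bound when $q$ approaches a singular point $p\in\partial U\cap K$, which is the only nontrivial case and the whole point of allowing $K\not\subset U$. You assert that ``in privileged coordinates adapted to $p$ the density of $\mathscr{P}^U$ with respect to Lebesgue extends continuously and strictly positively up to the boundary of $U$.'' This is false: Lebesgue measure in any smooth chart is a smooth volume, and $\frac{d\mathscr{P}^U}{d\mu}(q)\to\infty$ as $q\to p$ for every smooth volume $\mu$ --- a divergence stated explicitly in the paper and which is the very phenomenon the theorem addresses. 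Symmetrically $\muq(\widehat B_q)\to 0$ as $q\to p$, so your other ingredients (Hausdorff convergence of the balls plus a continuous positive density) would at best produce an indeterminate limit of the form $0\cdot\infty$; they cannot yield a two-sided bound. The local-compactness-of-jets argument works in the interior of $U$ but says nothing uniform near $\Sing$.

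What is actually needed is a quantitative statement that both densities blow up at the \emph{same} rate. The paper introduces $\nu(q)=\max\,\omega_q(Y_1(q),\dots,Y_n(q))$ over frames of iterated brackets adapted at $q$ (a quantity tending to $0$ at singular points) and proves two separate comparisons: (i) via the Barilari--Rizzi formula, $\frac{d\mathscr{P}^U}{d\mu}(q)$ is comparable to $1/\nu(q)$ uniformly on $\Reg\cap K$, the key structural input being that each matrix $B_j$ in that formula equals $b_j^Tb_j$ with $b_j$ containing an identity block, so that $1\leq\det B_j\leq 1+b(q)$ for a continuous function $b$; and (ii) quoting \cite{gjha}, $\frac{d\ss^Q}{d\mu}(q)$ is also comparable to $1/\nu(q)$, equivalently $\muq(\widehat B_q)\asymp\nu(q)$. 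The ratio then stays bounded even though each factor diverges. You correctly identify \cite{br} as the tool, but what must be extracted from it is this comparison of $\mathscr{P}^U$ with the adapted-frame volume $dY_1\wedge\cdots\wedge dY_n$ (together with the companion estimate on $\muq(\widehat B_q)$ from \cite{gjha}), not continuity of the Popp density up to $\Sing$, which does not hold.
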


As a consequence, the detailed analysis of the behaviour of  $\frac{d\ss^Q}{d\mu}(q)$ provided in \cite{gjha} also  applies to $\frac{d\mathscr{P}^{U}}{d\mu}(q)$. We stress that the compact set $K$ in Theorem~\ref{plop} may not be contained in $U$.  We have in particular the following properties if the boundary $\partial U$ contains a singular point $p$:
\begin{itemize}
  \item $\frac{d\mathscr{P}^{U}}{d\mu}(q) \to \infty$ as $q \to p$;
  \item $\frac{d\mathscr{P}^{U}}{d\mu}(q)$ may be not $\mu$-integrable near $p$; in other terms, the measure of balls $\mathscr{P}^{U} (B(q,r) \cap U)$ may tend to $\infty$ as $q \to p$ (the radius $r$ being fixed), or equivalently, $\mathscr{P}^{U}$, considered as a measure on $M$ by setting $\mathscr{P}^{U}=0$ on $M \setminus U$, may not be Radon measure on $M$.
\end{itemize}

To answer the second question, we recall the notion of \emph{equisingular submanifolds}, see \cite{gjha}. These are submanifolds $N$ of $M$ on which the restricted graded vector space
\begin{equation*}
 \mathfrak{gr}_p^N(\bD)= \bigoplus_{i=1}^{r(p)}(\bD^{i}_p\cap T_pN)/(\bD^{i-1}_p\cap T_pN)
 \end{equation*}
 depends smoothly on $p \in N$. A construction similar to the one of $\mathscr{P}^{U}$ gives rise to a smooth volume $\mathscr{P}^{N}$ on $N$. Note that the Hausdorff dimension of an equisingular submanifold can be computed algebraically as
 \begin{equation*}
\dim_H N = Q_N = \sum_{i=1}^{r(q)} i \left(\dim (\bD_q^i\cap T_qN) - \dim (\bD_q^{i-1}\cap T_qN) \right) \quad \hbox{for any } q \in N,
 \end{equation*}
see \cite[Theorem 5.3]{gjha}.

Assume now that $M$ is \emph{stratified by equisingular submanifolds}, that is, $M$ is a countable union $\cup_i N_i$ of disjointed equisingular submanifolds $N_i$. This assumption is not a strong one since it is satisfied by any analytic sub-Riemannian manifolds and by generic $C^\infty$ sub-Riemannian structures.
Note that the regular set $\Reg$ contains the union of all the open strata $N_i$ and that the singular set $\Sing$ is of $\mu$-measure zero. Under this assumption we can construct two kinds of Popp's measures on $M$:

\begin{itemize}
  \item we set
  \begin{equation*}
  \mathscr{P}_1 = \sum_{i \in {\mathcal J}}\mathscr{P}^{N_i},
  \end{equation*}
  where ${\mathcal J}=\{i \mid \dim_H N_i=\dim_H M\}$ (as before we consider $\mathscr{P}^{N}$ as a measure on $M$ by setting $\mathscr{P}^{N}=0$ on $M \setminus N$), and we obtain a measure that is absolutely continuous w.r.t.\ the Hausdorff measures $\hh^{\dim_H M}$ and $\ss^{\dim_H M}$;
  \item or we set
\begin{equation*}
\mathscr{P}_2 = \mathscr{P}^{\Reg}=\sum_{i \in {\mathcal O}}\mathscr{P}^{N_i},
\end{equation*}
where ${\mathcal O}=\{i \mid \dim {N_i}=\dim M\}$, and we obtain a measure that is absolutely continuous w.r.t.\ any smooth volume.
\end{itemize}


\section{Notations}
\label{se:notations}

\paragraph{\emph{Hausdorff measures.}}
Let us first recall some basic facts on Hausdorff measures. Let $(M,d)$ be
a metric space. We denote by $\diam S$ the diameter of a set $S
\subset M$. 
Let $\alpha \geq 0$ be a real number. For every set $E \subset M$, the
\emph{$\alpha$-dimensional Hausdorff measure} $\hh^\alpha$ of $E$  is
defined as  $\hh^\alpha(E)
= \lim_{\eps \to 0^+} \hh^\alpha_\eps(E)$, where
\begin{equation*}
\hh^\alpha_\eps(E) = \inf \left\{ \sum_{i=1}^\infty  \left(\diam S_i\right)^\alpha
\, : \, E \subset \bigcup_{i=1}^\infty S_i, \ S_i \hbox{ nonempty set} ,
\ \diam S_i \leq \eps \right\},
\end{equation*}
and the \emph{$\alpha$-dimensional spherical Hausdorff measure} is
defined as $\ss^\alpha(E)
= \lim_{\eps \to 0^+} \ss^\alpha_\eps(E)$, where
\begin{equation*}
\ss^\alpha_\eps(E) = \inf \left\{ \sum_{i=1}^\infty  \left(\diam
S_i\right)^\alpha \, : \, E \subset \bigcup_{i=1}^\infty S_i, \ S_i \hbox{ is
  a ball}, \ \diam
S_i \leq \eps  \right\}.
\end{equation*}
For every set $E\subset M$, the non-negative number
\begin{equation*}
D=\sup\{\alpha\geq0\mid \hh^\alpha(E)=\infty\}=\inf\{\alpha\geq0\mid \hh^\alpha(E)=0\}
\end{equation*}
is called the {\it Hausdorff dimension of $E$} and it is denoted as $\dim_{H}E$. Notice that $\hh^D(E)$
may be $0$, $>0$, or $\infty$.
By construction, for every subset $S \subset M$,
\begin{equation}\label{SHS}
\hh^\alpha(S)\leq\ss^\alpha(S)\leq 2^\alpha\hh^\alpha(S),
\end{equation}
hence the Hausdorff dimension can be defined equivalently using spherical measures.

Given a metric space $(M,d)$ we denote by $B^d(p,\eps)$ the open ball $\{q\in M\mid d(p,q)<\eps\}$.


\paragraph{\emph{Sub-Riemannian manifolds.}}
In the literature a sub-Riemannian manifold
  is usually a triplet  $(M,\bD,g)$, where $M$ is a smooth (i.e., ${\mathcal C}^\infty$) manifold,    $\bD$
is a
subbundle of $TM$ of rank $m<\dim
M$ and $g$ is a Riemannian metric  on $\bD$.
Using $g$, the length of  horizontal curves, i.e.,
absolutely continuous curves which are almost everywhere tangent to
$\bD$, is well-defined.
 When $\bD$ is Lie bracket generating, the map $d:M\times M\to\R$  defined as the infimum of length of horizontal
curves  between two given points is a continuous distance
(Rashevsky-Chow Theorem), and it is called  sub-Riemannian
  distance.

In the sequel, we are going to deal with  sub-Riemannian manifolds with singularities. Thus we find it more natural to work in a larger setting, where the map $q\mapsto \bD_q$ itself may have singularities. This leads us to the following generalized definition, see \cite{ABB,bellaiche}.

\begin{definition}\label{SR} A {\it sub-Riemannian} structure on a manifold $M$ is a triplet $({\bf U},\ps,f)$ where $({\bf U},\ps)$ is a Euclidean vector bundle over $M$ (i.e., a vector bundle $\pi_{\bf U}:{\bf U}\to M$ equipped with a smoothly-varying scalar product $q\mapsto \langle\cdot,\cdot\rangle_q$ on the fibre ${\bf U}_q$) and $f$ is a morphism of vector bundles $f:{\bf U}\to TM$, i.e. a smooth map linear on fibers and such that, for every $u\in {\bf U}$, $\pi(f(u))=\pi_{\bf U}(u)$, where $\pi:TM\to M$ is the usual projection.
\end{definition}

Let $({\bf U},\ps,f)$ be a sub-Riemannian structure on $M$.  We define the submodule $\bD\subset \VecM$ as
\begin{equation}\label{module}
\bD=\{f\circ\sigma\mid \sigma \textrm{ smooth section of }{\bf U}\},
\end{equation}
 and  for $q \in M$ we set $\bD_q=\{X(q)\mid X\in\bD\}\subset T_qM$.  Clearly $\bD_q=f({\bf U}_q)$.
The  length of a tangent vector $v\in \bD_q$ is defined
as
\begin{equation}\label{qf}
g_q(v):=
\inf\{\langle u,u\rangle_q\mid f(u)=v, u\in {\bf U}_q\}.
\end{equation}
An absolutely continuous  curve $\gamma:[a,b]\to M$ is  {\it horizontal } if $\dot\gamma(t)\in \bD_{\g(t)}$ for almost every $t$.
If $\bD$ is Lie bracket generating, that is
\begin{equation}\label{bg}
\forall\, q\in M\quad \textrm{Lie}_q\bD=T_qM,
\end{equation}
then  the map $d:M\times M\to\R$  defined as the infimum of length of horizontal
curves  between two given points is a continuous distance as in the classic case. In this paper, all sub-Riemannian manifolds are assumed to satisfy the Lie bracket generating condition \r{bg}.

Let $({\bf U}, \ps, f)$ be a sub-Riemannian structure on a manifold $M$, and $\bD$, $g$ the corresponding module and quadratic form as defined in \r{module} and \r{qf}. In analogy with the constant rank case and to simplify notations, in the sequel we will refer to the sub-Riemannian manifold as the triplet $(M, \bD, g)$.

Given $i\geq 1$, define recursively  the
  submodule $\bD^i\subset \VecM$ by
  \begin{equation*}
\bD^{1}=\bD,\quad \bD^{i+1}=\bD^{i}+[\bD,\bD^{i}].
\end{equation*}
Fix $p\in M$ and set $\bD^i_p=\{X(p)\mid X\in\bD^i\}$.
The Lie-bracket generating assumption implies that
there exists an integer $r(p)$
such that
 \begin{equation}\label{flagd}
\{0\}= \bD_p^0 \subset\bD_p^1\subset\dots\subset\bD_p^{r(p)}=T_p M.
\end{equation}
Set $n_i(p)=\dim\bD^i_p$ and
\begin{equation}
\label{defq}
Q(p)=\sum_{i=1}^{r(p)}i(n_{i}(p)-n_{i-1}(p)).
\end{equation}
  To write $Q(p)$ in a different way, we define the {\it weights} of the flag \r{flagd} at $p$ as the integers $w_1(p),\dots, w_{n}(p)$ such that $w_i(p)=s$
if $\dim\bD^{s-1}_p < i \leq \dim\bD^{s}_p$. Then $Q(p)= \sum_{i=1}^n w_i(p)$.
We say that a point $p$ is {\it regular} if, for every $i$, $n_i(q)$ is constant  as $q$ varies in a neighborhood of $p$. Otherwise, the point is said to be {\it singular}. A sub-Riemannian manifold with no singular point is said to be \emph{equiregular}.\medskip

We end by introducing a short notation for balls. When it is clear from the context, we will omit the upscript $d$ for balls $B(q,r)$ in a manifold $M$ endowed with a sub-Riemannian distance. Given $p\in M$ we denote by $\widehat B_p$ the unit ball $B^{\widehat d_p}(0,1)$ in the nilpotent approximation at $p$.

 We refer the reader to \cite{ABB,bellaiche,montgomery} for a primer in sub-Riemannian geometry.

\section{Measured Gromov--Hausdorff convergence in sub-Riemannian geometry}
\label{se:mgh}
In this section we prove some facts concerning measured Gromov--Hausdorff convergence of sub-Riemannian manifolds.

\subsection{Measured Gromov--Hausdorff convergence}
Recall the notion of Gromov--Hausdorff convergence of pointed metric spaces. Given a metric space $(X,d)$ and two subsets $A,B\subset X$, the Hausdorff distance between $A$ and $B$ is
\begin{equation*}
d_H(A,B)=\inf\{\eps>0\mid A\subset I_\eps(B), B\subset I_\eps(A)\},
\end{equation*}
 where $I_\eps(E)=\{x\in X\mid \mathrm{dist}(x,E)<\eps\}$.
\begin{definition}
Let $(X_k,d_k)$, $k\in\N$ be  metric spaces and $x_k\in X_k$. We say that the pointed metric spaces $(X_k,d_k,x_k)$ \emph{converge to $(X_\infty,d_\infty,x_\infty)$ in the   Gromov--Hausdorff sense}, provided for any $R>0$, the quantity
\begin{eqnarray*}
\inf\left\{ \right.\eps>0\mid& \exists\, (Z, d) \textrm{ compact metric space, } \exists \, i_k:\overline{B}^{d_k}(x_k, R)\to Z, \,\exists \, i_\infty:\overline{B}^{d_\infty}(x_\infty,R)\to Z, \\
 &\left. \textrm{isometric embeddings such that } d_H(i_k(\overline{B}^{d_k}(x_k,R)),i_\infty(\overline{B}^{d_\infty}(x_\infty,R)))<\eps\right\}
\end{eqnarray*}
converges to zero as $k$ goes to $\infty$. Any Gromov--Hausdorff limit is unique up to isometry.
\end{definition}

A particular case of Gromov--Hausdorff convergence is the one where the sequence of spaces is constructed by a blow-up procedure of the distance around a given point.
Take a metric space $(X,d)$ and a point $x\in X$. We say that the pointed metric space $(X_\infty, d_\infty, x_\infty)$ is a {\it metric tangent cone} to $(X,d)$ at $x$ if
$(X, \frac{1}{\eps} d, x)$ converges to $(X_\infty, d_\infty, x_\infty)$ in the Gromov--Hausdorff sense as $\eps \to 0$.

Given a sub-Riemannian manifold $(M,{\mathcal D},g)$ and $p\in M$,  a metric tangent cone to $(M,d)$ at $p$ exists and it is equal (up to an isometry) to $(T_pM, \hat d_p, 0)$, where $\hat d_p$ is the sub-Riemannian distance associated with the nilpotent approximation at $p$
(see \cite{bellaiche} for the proof and the definition of nilpotent approximation).

When a measure is given on a metric space, a natural notion of convergence is that  of measured Gromov--Hausdorff convergence.

\begin{definition}[see, for instance, \cite{GMS}]\label{mgh}
Let $(X_k,d_k)$, $k\in\N$, be complete and separable metric spaces, $m_k$ be Borel measures on $X_k$ which are finite on bounded sets, and $x_k\in\mathrm{supp}(m_k)$. We say that the pointed metric measure spaces $(X_k,d_k,m_k,x_k)$ \emph{converge to $(X_\infty,d_\infty,m_\infty,x_\infty)$ in the  measured Gromov--Hausdorff sense}, provided that for any $\delta, R>0$ there exists $N(\delta,R)$ such that for all $k\geq N(\delta,R)$ there exists a Borel map $f_k^{R,\delta}:B^{d_k}(x_k,R)\to X_\infty$ such that
\begin{itemize}
\item[a)] $f_k^{R,\delta}(x_k)=x_\infty$,
\item[b)] $\sup_{x,y\in B^{d_k}(x_k,R)}|d_k(x,y)-d_\infty(f_k^{R,\delta}(x),f_k^{R,\delta}(y))|\leq \delta$,
\item[c)] the $\delta$-neighborhood of $f_k^{R,\delta}(B^{d_k}(x_k,R))$ contains $B^{d_\infty}(x_\infty,{R-\delta})$,
\item[d)] $(f_k^{R,\delta})_\sharp(m_k\llcorner_{B^{d_k}(x_k,R)})$ weakly converges\footnote{We introduce the following notation. Given two metric spaces $(X,d_X)$, $(Y,d_Y)$, a measurable map $T:X\to Y$ (where we consider Borel $\sigma$-algebras on $X$ and $Y$), and a non negative Radon measure $\mu$ on $X$, we define the push-forward
measure  $T_\#\mu$ on $Y$ by
\begin{equation*}
T_\# \mu(B)=\mu(T^{-1}(B)).
\end{equation*}
 This definition provides the change of variable formula
\begin{equation}\label{cov}
\int_Y\varphi \, dT_\# \mu=\int_X\varphi\circ T\, d\mu,
\end{equation}
for every bounded or nonnegative measurable map $\varphi:Y\to \R$.} to $m_{\infty}\llcorner_{B^{d_\infty}(x_\infty,R)}$ as $k\to\infty$ for almost every $R>0$.
\end{itemize}
\end{definition}
It is not hard to see that  conditions a), b), c) are equivalent to requiring that the sequence of pointed metric spaces $(X_k,d_k,x_k)$ converges to $(X_\infty,d_\infty, x_\infty)$ in the Gromov--Hausdorff sense. The only condition involving measures is the last one.

One would expect that, when $(X_k,d_k,x_k)$ converges to some $(X_\infty,d_\infty,x_\infty)$ in the Gromov--Hausdorff sense and $m_k$ is some measure whose construction is intrinsically associated to $d_k$ (e.g.\ $m_k$ a Hausdorff measure), then measured Gromov--Hausdorff convergence (with limit measure the one corresponding to the same construction in the limit space $(X_\infty,d_\infty)$) should be quite natural to prove. It turns out that this is not the case and in general one needs additional conditions. For instance, when $(X_k,d_k)$ are Alexandrov spaces having Hausdorff dimension $\alpha$ and converging to some $(X_\infty,d_\infty, x_\infty)$ in the Gromov--Hausdorff sense, a sufficient condition for the measured Gromov--Hausdorff convergence of $(X_k,d_k, {\mathcal H}^\alpha_{d_k}, x_k)$ to $(X_\infty,d_\infty, {\mathcal H}^\alpha_{d_\infty}, x_\infty)$
is that all metric spaces $(X_k,d_k)$ satisfy the same curvature bound (see \cite[Theorem~10.10.10]{bb}).

In the sequel we are going to show two results concerning measured Gromov--Hausdorff convergence in sub-Riemannian manifolds.

\subsection{Convergence results for smooth volumes and spherical Hausdorff measures}
\label{se:mghS}
We study first the case of smooth volumes in sub-Riemannian manifolds.

Let $(M,\bD,g)$ be a sub-Riemannian manifold. At a regular point $p\in M$ the metric tangent cone $(T_pM,\widehat d_p)$ to $(M,d)$ has a structure of a Carnot group.  Assume $M$ is oriented and let $\mu$ be a smooth volume on $M$ associated with a volume form $\omega$. If $p$ is a regular point, then
$\omega$   induces canonically a left-invariant volume form $\hat\omega_p$ on $T_pM$. We denote by $\mup$ the smooth volume on $T_pM$ defined by $\hat\omega_p$.

\begin{proposition}\label{smooth}
Let $(M,\bD,g)$ be an  oriented sub-Riemannian manifold and let $\mu$ be any smooth volume on $M$.  Then, for every regular point $p\in M$, $(M,\frac1\eps d,\frac{1}{\eps^{Q(p)}}\mu, p)$ converges to $(T_pM,\widehat d_p,\mup,0)$ in the measured Gromov--Hausdorff sense, as $\eps\to 0$.
\end{proposition}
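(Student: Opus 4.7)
The pointed Gromov--Hausdorff convergence $(M,\tfrac1\eps d,p)\to(T_pM,\widehat d_p,0)$ is Bellaiche's classical tangent-cone theorem, so conditions~(a)--(c) of Definition~\ref{mgh} come essentially for free once a concrete family of approximation maps is written down; the content of Proposition~\ref{smooth} is condition~(d), the weak convergence of the rescaled push-forwards. My plan is to take as $f_\eps^{R,\delta}$ the natural blow-up map built from privileged coordinates at $p$ and reduce (d) to a change of variables in which the Jacobian of the anisotropic dilation exactly absorbs the prefactor $\eps^{-Q(p)}$.

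Concretely, I would fix privileged coordinates $\psi\colon V\to\R^n\simeq T_pM$ centered at $p$, let $\Delta_t$ denote the anisotropic dilation on $T_pM$ with weights $(w_1(p),\dots,w_n(p))$, and for $\eps$ small enough that $B(p,R\eps)\subset V$ set $f_\eps^{R,\delta}(q)=\Delta_{1/\eps}(\psi(q))$. Then (a) is immediate since $\psi(p)=0$, and (b),~(c) follow from Bellaiche's uniform-on-compacts estimate $\tfrac1\eps d(\psi^{-1}(\Delta_\eps x),\psi^{-1}(\Delta_\eps y))\to\widehat d_p(x,y)$. In this chart the smooth volume $\mu$ has a density $h\in C^\infty$ with $h(0)>0$; for a test function $\varphi\in C_c(T_pM)$, combining the change of variables~\eqref{cov} with the substitution $y=\Delta_{1/\eps}(x)$ (whose Jacobian is $\eps^{Q(p)}$) turns the integral of $\varphi$ against $(f_\eps^{R,\delta})_\sharp(\eps^{-Q(p)}\mu\llcorner B(p,R\eps))$ into
\begin{equation*}
\int_{\Delta_{1/\eps}(\psi(B(p,R\eps)))}\varphi(y)\,h(\Delta_\eps y)\,dy.
\end{equation*}
Since $h(\Delta_\eps y)\to h(0)$ uniformly on $\mathrm{supp}\,\varphi$ and the domain of integration Hausdorff-converges to $B^{\widehat d_p}(0,R)$ by the same Bellaiche estimate, dominated convergence delivers the limit $h(0)\int_{B^{\widehat d_p}(0,R)}\varphi(y)\,dy$ for almost every $R>0$.

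To close the argument one must recognise this limit as $\int\varphi\,d(\mup\llcorner B^{\widehat d_p}(0,R))$. By construction, $\hat\omega_p$ is the left-invariant form on the Carnot group $T_pM$ whose value at the origin coincides with $\omega|_p$ under the identification provided by the privileged coordinates, which in those coordinates means $\hat\omega_p=h(0)\,dy_1\wedge\cdots\wedge dy_n$; granted this, the proposition follows. The step I expect to be the main obstacle is exactly this coordinate description of $\hat\omega_p$: one must verify that the canonical induction $\omega\mapsto\hat\omega_p$ really produces the constant-coefficient Haar form $h(0)\,dy$ in \emph{any} privileged chart, so that the identification with $\mup$ is coordinate-independent and condition~(d) holds unambiguously.
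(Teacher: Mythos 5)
Your argument is essentially identical to the paper's proof: the same blow-up map $f_\eps=\delta_{1/\eps}$ in privileged coordinates, conditions (a)--(c) from Bella\"{\i}che's estimates, and condition (d) via the change of variables $y=\delta_{1/\eps}x$ whose Jacobian $\eps^{Q(p)}$ cancels the normalization, followed by continuity of the density at $0$. The step you flag as the main obstacle is dispatched in the paper simply by taking $d\mup=\omega(\partial_{x_1},\dots,\partial_{x_n})|_0\,d\mathcal{L}$ as the working expression of $\mup$ in coordinates (the Haar measure of the Carnot group $T_pM$ in privileged coordinates is Lebesgue measure, so the left-invariant form agreeing with $\omega|_p$ at the origin is the constant-coefficient one); the remark following the proof even points out that this coordinate expression is all that is needed and serves as the definition of $\mup$ at singular points.
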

\begin{proof}
Denote by $d_\eps$ the distance $d/\eps$ and let $R>0$. Note that the metric space $(B^{d_\eps}(p,R),d_\eps)$, i.e., the set $\{q\in M\mid \frac1\eps d(q,p)<R\}$ endowed with the distance $d_\eps$, coincides with  $(B^d(p, R\eps),d_\eps)$.
We already know that $(\overline{B}^{d}(p,R\eps),d_\eps, p)$ converges to $(\overline{B}^{\widehat{d}_p}(0,R),\widehat d_p,0)$ in the Gromov--Hausdorff sense.
To cast this convergence in the language of Definition~\ref{mgh}, we fix some privileged coordinates $(x_1,\dots,x_n)$ centered at $p$ and we define the function\footnote{With this definition of $f_\eps$, the image $f_\eps(B^{d}(p,{R\eps}))$ may not be entirely contained in  $B^{\widehat d_p}(0,R)$. To be more precise, one should take as $f_\eps(x)$ the projection of the point $\delta_{1/\eps}x$ in $B^{\widehat d_p}(0,R)$. For the sake of readability and taking into account \r{bbella},
we prefer to avoid introducing the projection.}
$f_\eps: B^{d}(p,R\eps)\to B^{\widehat d_p}(0,R)$ by $f_\eps(x)=\delta_{1/\eps}x$, where  $\delta_\lambda:\R^n\to\R^n$ is the nonhomogeneous dilation $\delta_\lambda(x_1,\dots,x_n)=(\lambda^{w_1(p)}x_1,\dots, \lambda^{w_n(p)}x_n)$. Then condition a) is satisfied by construction and condition b) is a consequence of the convergence
\begin{equation*}
\lim_{\eps\to 0}\sup_{x,x'\in \overline{B}^{d}_{R\eps}(p)}\left|\frac{d(x,x')}{\eps}-\widehat d_p(f_{\eps}(x),f_\eps(x'))\right|=0,
\end{equation*}
see for instance  \cite[Theorem~7.32]{bellaiche}. Condition c) follows by the fact that, as $\eps\to 0$,
\begin{equation}\label{bbella}
B^{\widehat d_p}(0,{R\eps(1-o(1))})\subset B^{d}(p, R\eps) \subset B^{\widehat d_p}(0,{R\eps(1+o(1))}),
\end{equation}
see \cite[Corollary~7.33]{bellaiche}.

To prove condition d) we must show that for every continuous function $h:T_pM\to \R$ there holds
\begin{equation*}
\lim_{\eps \to 0}\int_{B^{\widehat d_p}(0,R)}h\,d\mu_\eps =\int_{B^{\widehat d_p}(0,R)}h\, d\mup,
\end{equation*}
where
\begin{equation*}
\mu_\eps=(f_\eps)_\sharp\left(\frac{1}{\eps^{Q}}\mu\llcorner_{B^d(p, R\eps)}\right),
\end{equation*}
and we
set $Q:=Q(p)$. By the change of variable formula \r{cov},
\begin{equation*}
\int_{B^{\widehat d_p}(0,R)}h(x)\,d\mu_\eps(x) = \frac{1}{\eps^{Q}}\int_{B^d(p, R\eps)}h(\delta_{1/\eps}x)\,d\mu(x).
\end{equation*}
Thanks to \r{bbella},
\begin{equation*}
\lim_{\eps\to 0} \frac{1}{\eps^{Q}}\int_{B^d(p, R\eps)}h(\delta_{1/\eps}x)\,d\mu(x)=\lim_{\eps\to 0}\frac{1}{\eps^{Q}}\int_{B^{\widehat d_p}(0,{R\eps(1+o(1))})}h(\delta_{1/\eps}x)\,d\mu(x),
\end{equation*}
whenever one among the two limit exists.

Note that in the coordinates $x$ we can express $d\mu(x)$ as $\omega(\partial_{x_1},\dots ,\partial_{x_n})|_x d{\mathcal L}$, where $d{\mathcal L}=dx_1\dots d x_n$, and $d\mup (x)$ as $\omega(\partial_{x_1},\dots , \partial_{x_n})|_0 d{\mathcal L}$.
Now, since the Lebesgue measure ${\mathcal L}$ in privileged coordinates is homogeneous of order ${Q}$ with respect to dilations $\delta_\lambda$, applying the change of variable $y=\delta_{1/\eps}x$ we obtain
\begin{equation*}
\frac{1}{\eps^{Q}}\int_{B^{\widehat d_p}(0,{R\eps(1+o(1))})}h(\delta_{1/\eps}x)\,d\mu(x) =\int_{B^{\widehat d_p}(0,{R(1+o(1))})}h(y) \omega(\partial_{y_1},\dots, \partial_{y_n})|_{\delta_\eps y}\, d{\mathcal L}(y).
\end{equation*}
Since the function $h$ is continuous and the function $\omega(\partial_{y_1},\dots, \partial_{y_n})$ is smooth,
we deduce
\begin{equation}\label{uti}
\lim_{\eps\to 0}\int_{B^{\widehat d_p}(0,{R(1+o(1))})}h(y) \omega(\partial_{y_1},\dots, \partial_{y_n})|_{\delta_\eps y}\, d{\mathcal L}(y)= \int_{B^{\widehat d_p}(0,R)}h(y) \,d\mup(y),
\end{equation}
which concludes the proof.
\end{proof}
\begin{remark}
In the proof above, we use inclusions \r{bbella} to infer that the required convergence can be proved on the nilpotent ball, which is a homogeneous set. Then, the main ingredient to deduce condition d) is the fact that the Radon--Nikodym derivative of $\mu$ with respect to the Lebesgue measure in coordinates is continuous at $0$.
\end{remark}

The assumption of regularity of $p$ in Proposition~\ref{smooth} can be dropped. Indeed, without this assumption it is not possible in general to define $\hat\omega_p$ but it is possible to define the measure $\mup$ on $T_pM$: choose local coordinates $x$ near $p$, which identify $T_pM$ to $\R^n$, and set, for a Borel set $A \subset T_pM$,
\begin{equation}
\label{eq:defmup}
\mup(A) = \int_A \omega(\partial_{x_1},\dots , \partial_{x_n})|_0 dx_1 \dots d x_n.
\end{equation}
And this expression of $\mup$ is the only one that we need in the preceding proof (it is used in \eqref{uti}).
Note that the coordinates $x$ need not be adapted.
\begin{corollary}
The statement of Proposition~\ref{smooth}  holds at a singular point $p$.
\end{corollary}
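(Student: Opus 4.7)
The plan is to observe that the proof of Proposition~\ref{smooth} transfers essentially verbatim to a singular point $p$, once \eqref{eq:defmup} is used as the definition of $\hat\mu^p$. Regularity of $p$ entered the original argument only to produce a canonical left-invariant $n$-form $\hat\omega_p$ on $T_pM$, and hence a canonical smooth volume on the tangent cone; every other ingredient---privileged coordinates $(x_1,\dots,x_n)$ centred at $p$, the nonhomogeneous dilation $\delta_\lambda$, the weights $w_i(p)$, the nilpotent approximation $(T_pM,\widehat d_p)$, and the two-sided estimates \eqref{bbella}---is provided by Bellaïche's construction at an arbitrary point of $M$.

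First, I would fix privileged coordinates $(x_1,\dots,x_n)$ centred at $p$, set $f_\eps(x)=\delta_{1/\eps}x$ as in the regular case, and check conditions (a), (b), (c) of Definition~\ref{mgh}. These are unaffected by the singular character of $p$: condition (a) is automatic, while (b) and (c) rely only on the uniform convergence statement of \cite[Theorem~7.32]{bellaiche} and on the nilpotent-ball inclusions of \cite[Corollary~7.33]{bellaiche}, both of which hold at any point of a sub-Riemannian manifold.

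Next, I would verify condition (d) by repeating the change of variables from the proof of Proposition~\ref{smooth}. The identity $Q(p)=\sum_i w_i(p)$ is a purely linear-algebraic consequence of the definition of the weights, so the Lebesgue measure ${\mathcal L}$ in privileged coordinates is homogeneous of order $Q(p)$ under $\delta_\lambda$ regardless of whether $p$ is regular. Performing the substitution $y=\delta_{1/\eps}x$ yields
\begin{equation*}
\frac{1}{\eps^{Q(p)}}\int_{B^{d}(p,R\eps)}h(\delta_{1/\eps}x)\,d\mu(x) = \int_{f_\eps(B^{d}(p,R\eps))}h(y)\,\omega(\partial_{y_1},\dots,\partial_{y_n})|_{\delta_\eps y}\,d{\mathcal L}(y).
\end{equation*}
Since $y\mapsto \omega(\partial_{y_1},\dots,\partial_{y_n})(y)$ is smooth and $\delta_\eps y\to 0$ uniformly on compact sets, combining this with \eqref{bbella} allows one to pass to the limit exactly as in the regular case; by \eqref{eq:defmup} the resulting integral is precisely $\int_{B^{\widehat d_p}(0,R)}h\,d\hat\mu^p$.

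No genuine obstacle arises---only a point of clarification, already pointed out in the remark preceding the corollary: the sole quantity extracted from $\mu$ in the limiting computation is the value $\omega(\partial_{x_1},\dots,\partial_{x_n})|_0$ in privileged coordinates, which is meaningful irrespective of the regularity of $p$, so every step of the regular-point proof continues to make sense.
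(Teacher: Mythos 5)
Your proposal is correct and follows essentially the same route as the paper: the authors likewise observe that regularity of $p$ was used only to define $\hat\omega_p$ canonically, and that replacing it by the coordinate definition \eqref{eq:defmup} of $\mup$ is all that the proof of Proposition~\ref{smooth} actually requires, the remaining ingredients (privileged coordinates, dilations, the estimates \eqref{bbella}, and the homogeneity of Lebesgue measure of order $Q(p)=\sum_i w_i(p)$) being available at any point.
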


\begin{remark}
With the definition \eqref{eq:defmup} of $\mup$, we have the following expansion, already known at regular points (see \cite[Corollary 28]{abbh}),
\begin{equation*}
\mu(B(p,\eps)) = \eps^{Q(p)} \mup (\widehat B_p) + o(\eps^{Q(p)}).
\end{equation*}
\end{remark}

We now consider measured Gromov--Hausdorff convergence of spherical Hausdorff measures.

\begin{proposition}
\label{sfer}
Let $(M,\bD,g)$ be a sub-Riemannian manifold and $p\in M$ be a regular point. Set $Q:=Q(p)$. Denote by $\ss^Q_{d/\eps}$ the $Q$-dimensional spherical Hausdorff measure on $M$ associated with the distance $\frac1\eps d$ and by $\ss_{\widehat d_p}^Q$ the $Q$-dimensional spherical Hausdorff measure on $T_pM$ associated with $\widehat d_p$. Then $(M,\frac1\eps d,\ss^Q_{d/\eps}, p)$ converges to $(T_pM,\widehat d_p,\ss_{\widehat d_p}^Q,0)$ in the measured Gromov--Hausdorff sense, as $\eps\to 0$.
\end{proposition}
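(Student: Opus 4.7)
The plan is to reduce the proof to Proposition \ref{smooth} via the Radon--Nikodym density between $\ss^Q$ and any smooth volume. Observe first that the spherical Hausdorff measure scales homogeneously under dilation of the metric, so $\ss^Q_{d/\eps} = \eps^{-Q} \ss^Q$ on $M$. Taking $f_\eps(x) = \delta_{1/\eps} x$ in privileged coordinates at $p$ exactly as in the proof of Proposition \ref{smooth}, conditions a), b), c) of Definition \ref{mgh} are already established there, since they depend only on the metric structure. Only condition d) needs a separate argument.

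Fix a smooth volume $\mu$ defined on a neighborhood of $p$. By the properties (iii)--(v) recalled in Section \ref{se:haus_intro}, near the regular point $p$ we have $\ss^Q = \rho \, \mu$ with $\rho$ a continuous function satisfying
\begin{equation*}
\rho(p) = \frac{d\ss^Q}{d\mu}(p) = \frac{2^Q}{\mup(\widehat B_p)}.
\end{equation*}
Applying the same density formula to the Carnot group $(T_pM,\widehat d_p)$ at the origin (which is its own nilpotent approximation, and whose smooth volume $\mup$ is already graded-homogeneous, hence equals its own blow-up) yields $\frac{d\ss^Q_{\widehat d_p}}{d\mup}(0) = \rho(p)$. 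Since both $\ss^Q_{\widehat d_p}$ and $\mup$ are left-invariant on the Carnot group, they are proportional, and the proportionality constant must be $\rho(p)$. Therefore $\ss^Q_{\widehat d_p} = \rho(p)\,\mup$.

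For any continuous bounded test function $h : T_pM \to \R$ the change of variables formula \eqref{cov} gives
\begin{equation*}
\int h \, d(f_\eps)_\sharp\!\left(\eps^{-Q}\, \ss^Q \llcorner_{B^d(p, R\eps)}\right) = \eps^{-Q} \int_{B^d(p, R\eps)} h(\delta_{1/\eps} x)\, \rho(x)\, d\mu(x).
\end{equation*}
Splitting $\rho(x) = \rho(p) + (\rho(x) - \rho(p))$, the principal term produces, by Proposition \ref{smooth} applied to the continuous function $h$, the limit $\rho(p) \int_{B^{\widehat d_p}(0,R)} h\, d\mup = \int_{B^{\widehat d_p}(0,R)} h\, d\ss^Q_{\widehat d_p}$. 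The remainder is bounded by $\|h\|_\infty \cdot \sup_{x \in B^d(p, R\eps)} |\rho(x) - \rho(p)| \cdot \eps^{-Q} \mu(B^d(p, R\eps))$; the factor $\eps^{-Q} \mu(B^d(p, R\eps))$ is uniformly bounded (it converges to $\mup(B^{\widehat d_p}(0,R))$ by Proposition \ref{smooth}), while the supremum goes to $0$ by continuity of $\rho$ at $p$, so the error vanishes as $\eps \to 0$.

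The only step that genuinely uses the regularity of $p$, and which I regard as the crux of the argument, is the continuity of the density $\rho$ at $p$ (item (v) in Section \ref{se:haus_intro}, proved in \cite{gjha}). This continuity is what allows replacing $\rho(x)$ by the constant $\rho(p)$ asymptotically and hence reducing the spherical Hausdorff case to the smooth-volume case already handled in Proposition \ref{smooth}. It is also the reason why the analogous statement for the non-spherical Hausdorff measure $\hh^Q$ remains open (Question 1 of the introduction): for $\hh^Q$ the corresponding continuity of the density is not currently known.
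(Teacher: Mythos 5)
Your proposal is correct and follows essentially the same route as the paper's proof: reduce to condition d), use the scaling $\ss^Q_{d/\eps}=\eps^{-Q}\ss^Q$, invoke the absolute continuity of $\ss^Q$ w.r.t.\ a smooth volume with the continuous density $\frac{2^Q}{\hat\mu^x(\widehat B_x)}$ from \eqref{densintro}, identify $\ss^Q_{\widehat d_p}=\rho(p)\,\mup$ via the Haar-measure argument on the Carnot group, and conclude from the smooth-volume case of Proposition~\ref{smooth}. Your splitting $\rho(x)=\rho(p)+(\rho(x)-\rho(p))$ is just a slightly more explicit packaging of the limit the paper takes in \eqref{uti}, and you correctly identify the continuity of the density at the regular point as the crux.
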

The idea of the proof is the same as the one used to deduce Proposition~\ref{smooth} and it is based on the fact that, near a regular point, $\ss^Q$ is absolutely continuous with respect to $\mu$ and its Radon--Nikodym derivative (which is computed  explicitly in \cite{abbh}) is a continuous function.

\begin{proof}
Let $R>0$. Reasoning as in the proof of Proposition~\ref{smooth} it suffices to prove condition d), namely that, for every continuous function $h:T_pM\to\R$,
\begin{equation*}
\lim_{\eps \to 0}\int_{B^{\widehat d_p}(0,R)}h\,d\sigma_\eps =\int_{B^{\widehat d_p}(0,R)}h\, d\ss_{\widehat d_p}^Q,
\end{equation*}
where
\begin{equation*}
\sigma_\eps=(f_\eps)_\sharp\left(\ss^Q_{d/\eps}\llcorner_{B^d(p, R\eps)}\right),
\end{equation*}
and $f_\eps$ is defined as in the proof of Proposition~\ref{smooth}  in a system of privileged coordinates $x$ centered at $p$.

First of all, notice that by construction of Hausdorff measures, we have the identity $\ss^Q_{d/\eps}=\frac{1}{\eps^Q}\ss^Q$, where $\ss^Q$ is the $Q$-dimensional Hausdorff measure in $M$ associated with $d$.
Hence, using the change of variable formula \r{cov},
\begin{equation*}
\int_{B^{\widehat d_p}(0,R)}h(x)\,d\sigma_\eps(x) = \int_{B^d(p, R\eps)}h(\delta_{1/\eps}x)\,d\ss_{d/\eps}^Q(x)=\frac{1}{\eps^Q}\int_{B^d(p, R\eps)}h(\delta_{1/\eps}x)\,d\ss^Q(x).
\end{equation*}
Thanks to \r{bbella},
\begin{equation*}
\lim_{\eps\to 0} \frac{1}{\eps^{Q}}\int_{B^d(p, R\eps)}h(\delta_{1/\eps}x)\,d\ss^Q(x)=\lim_{\eps\to 0}\frac{1}{\eps^{Q}}\int_{B^{\widehat d_p}(0,{R\eps(1+o(1))})}h(\delta_{1/\eps}x)\,d\ss^Q(x),
\end{equation*}
whenever one among the two limit exists.
Let $\mu$ be a smooth volume on $M$ (or on an orientable neighbourhood of $p$). Since $p$ is regular, in a neighbourhood of this point $\ss^Q$ is absolutely continuous with respect to $\mu$, the Radon--Nikodym derivative $\frac{d\ss^Q}{d\mu}$ is given explicitly by \eqref{densintro}, i.e.\
\begin{equation*}
\frac{d\ss^Q}{d\mu}(x)=\frac{2^Q}{\hat \mu^x(\widehat B_x)},
\end{equation*}
and it is a continuous function. 
Thus
\begin{eqnarray*}
\lim_{\eps\to 0}\frac{1}{\eps^{Q}}\int_{B^{\widehat d_p}(0,{R\eps(1+o(1))})}h(\delta_{1/\eps}x)\,d\ss^Q(x)&=&\lim_{\eps\to 0}\frac{1}{\eps^{Q}}\int_{B^{\widehat d_p}(0,{R\eps(1+o(1))})}h(\delta_{1/\eps}x)\frac{d\ss^Q}{d\mu}(x)\,d\mu(x)\\
&=&\lim_{\eps\to 0}\int_{B^{\widehat d_p}(0,{R(1+o(1))})}h(y)\frac{d\ss^Q}{d\mu}(\delta_\eps y)\omega(\partial_{y_1},\dots, \partial_{y_n})|_{\delta_\eps y}\, d{\mathcal L}(y).
\end{eqnarray*}
where we apply the change of variable formula \r{cov} and express  $\mu$ in terms of the Lebesgue measure  in the chosen system of privileged coordinates.
By  convergence \r{uti}, we deduce that
\begin{equation*}
\lim_{\eps\to 0}\frac{1}{\eps^{Q}}\int_{B^{\widehat d_p}(0,{R\eps(1+o(1))})}h(\delta_{1/\eps}x)\,d\ss^Q(x)=\int_{B^{\widehat d_p}(0,R)}h(y)\frac{d\ss^Q}{d\mu}(0)\,d\mup(y).
\end{equation*}
 %
%
Now, since $p$ is a regular point, $T_pM$ has a structure of a Carnot group and $\mup$ and  $\ss^Q_{\widehat d_p}$ are both Haar measures on this group. As a consequence they are proportional with a coefficient equal to
\begin{equation*}
\frac{d\ss^Q_{\widehat d_p}}{d\mup}(x)=\frac{2^Q}{\hat \mu^p(\widehat B_p)}=\frac{d\ss^Q}{d\mu}(0), \qquad \forall\, x\in T_pM,
\end{equation*}
(remind that in privileged coordinates $p$ corresponds to $x=0$).
 Finally,
 \begin{equation*}
 \int_{B^{\widehat d_p}(0,R)}h(y)\frac{d\ss^Q}{d\mu}(0)\,d\mup(y)=\int_{B^{\widehat d_p}(0,R)}h(y)\,d\ss_{\widehat d_p}^Q,
 \end{equation*}
which ends the proof.
 \end{proof}

\begin{remark}
The key property in the proof of weak convergence of spherical Hausdorff measure is that it is absolutely continuous with respect to a smooth volume and with continuous Radon--Nikodym derivative.
To our best understanding, the idea of using a smooth volume to handle  spherical Hausdorff measure is the only way to get convergence (without further assumptions).
\end{remark}


\subsection{Hausdorff measures}
In this section we analyse the problem of measured Gromov--Hausdorff convergence for Hausdorff measures.

We start with a simple fact.
\begin{proposition}\label{yui}
Let $(M,\bD,g)$ be an  oriented sub-Riemannian manifold and $\mu$ be a smooth volume on $M$.
Let $U \subset M$ be a connected component of the set of regular points and $Q$ be the constant value of $Q(p)$ for $p \in U$. Then $\mu$ and $\hh^Q$ are mutually absolutely continuous on $U$ and the Radon--Nikodym derivative of $\hh^Q$ with respect to $\mu$ can be computed as a density, i.e.,
\begin{equation*}
\frac{d\hh^Q}{d\mu}(p)=\lim_{r\to 0}\frac{\hh^Q(B(p, r))}{\mu(B(p,r))},\quad \forall\, p\in U.
\end{equation*}
\end{proposition}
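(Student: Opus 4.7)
The plan is to transfer properties (ii)--(iv) of the introduction for $\ss^Q$ over to $\hh^Q$ by exploiting the sandwich inequality $\hh^Q \leq \ss^Q \leq 2^Q \hh^Q$ from \eqref{SHS}, and then to invoke a differentiation theorem for Radon measures on a doubling metric space.

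First I would establish that $\hh^Q$ is a Radon measure on $U$: since $\hh^Q \leq \ss^Q$ and $\ss^Q$ is Radon on $U$ by property~(ii) above, $\hh^Q$ is locally finite, and Borel regularity is a standard property of Hausdorff measures on metric spaces. Next, mutual absolute continuity of $\hh^Q$ and $\mu$ follows directly from the sandwich inequality combined with the mutual absolute continuity of $\ss^Q$ and $\mu$ (property~(iii)): if $\mu(A)=0$ then $\ss^Q(A)=0$ hence $\hh^Q(A)=0$, and conversely if $\hh^Q(A)=0$ then $\ss^Q(A) \leq 2^Q \hh^Q(A)=0$ whence $\mu(A)=0$.

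For the density formula, the key input is the local doubling property of $\mu$ on $U$. The expansion $\mu(B(p,\eps)) = \eps^Q \mup(\widehat B_p) + o(\eps^Q)$ recalled in the remark following Proposition~\ref{smooth}, together with the positivity and continuity of the map $p \mapsto \mup(\widehat B_p)$ on $U$, shows that on any compact $K \subset U$ there is a constant $C_K>0$ with $\mu(B(p, 2r)) \leq C_K \mu(B(p, r))$ for all $p \in K$ and $r$ small enough. Hence $(U, d, \mu)$ is a locally doubling metric measure space, so the Lebesgue--Besicovitch differentiation theorem applies to any Radon measure $\nu \ll \mu$ and gives
\begin{equation*}
\frac{d\nu}{d\mu}(p) = \lim_{r\to 0} \frac{\nu(B(p,r))}{\mu(B(p,r))} \quad \text{for } \mu\text{-a.e.\ } p\in U.
\end{equation*}
Taking $\nu = \hh^Q$ yields the claim, with the understanding that the Radon--Nikodym derivative is defined only up to a $\mu$-null set and the density limit is taken as its canonical pointwise representative.

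The main (technical, not conceptual) hurdle is the local doubling property, which one can extract either from the remark cited above or, alternatively, from the classical ball-box estimates on equiregular regions providing uniform two-sided bounds $c_K r^Q \leq \mu(B(p,r)) \leq C_K r^Q$ for $p$ in a compact subset of $U$ and $r$ small; in either form doubling with a uniform constant is immediate, after which the proof is essentially a direct invocation of standard metric measure space machinery.
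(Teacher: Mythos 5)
Your proof is correct and follows essentially the same route as the paper: the sandwich inequality $\hh^Q\le\ss^Q\le 2^Q\hh^Q$ yields that $\hh^Q$ is Radon on $U$ and mutually absolutely continuous with $\mu$, after which a differentiation theorem for Radon measures gives the density formula. The paper disposes of the second step in one line by citing \cite[Theorem~4.7]{simon} with $\mu_1=\mu$, $\mu_2=\hh^Q$; you instead verify the hypothesis that makes such a theorem applicable in a sub-Riemannian metric space, namely that $\mu$ is locally uniformly doubling on $U$ (extracted from $\mu(B(p,\eps))=\eps^{Q}\mup(\widehat B_p)+o(\eps^{Q})$ with $p\mapsto\mup(\widehat B_p)$ continuous and positive, or from ball--box estimates), which is what gives the symmetric Vitali property needed for the differentiation theorem. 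This is a worthwhile addition rather than a detour, since the Besicovitch covering theorem can fail for sub-Riemannian balls, so the doubling/Vitali hypothesis is precisely the point that must be checked before invoking the ``standard machinery.'' The only discrepancy is that you conclude the density formula for $\mu$-a.e.\ $p\in U$ rather than for every $p\in U$ as the statement is phrased; this is what the differentiation theorem actually delivers, and the paper's one-line citation yields no more, so it is a matter of how one reads the statement rather than a gap in your argument.
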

\begin{proof}
Since $\ss^Q$ is Borel regular and finite on compact sets, so is $\hh^Q$ since $\hh^Q \leq\ss^Q \leq 2^Q \hh^Q$. Hence $\hh^Q$ is a Radon measure and we apply \cite[Theorem~4.7]{simon} to $X=M$, $\mu_1=\mu$ and $\mu_2=\hh^Q$ to obtain the conclusions.
\end{proof}

Assume $p\in M$ is a regular point. In the spirit of Proposition~\ref{sfer}, a natural question is to ask if $(M,d/\eps,\hh^Q_{d/\eps}, p)$ converges to $(T_pM,\widehat d_p,\hh_{\widehat d_p}^Q,0)$ in the measured Gromov--Hausdorff sense, as $\eps\to 0$.

As we recall in the proof of Proposition~\ref{smooth}, conditions a), b) and c) of Definition~\ref{mgh} (which do not involve measures) are verified choosing $f_\eps: B^{d}(p,{R\eps})\to B^{\widehat d_p}(0,R)$ as $f_\eps(x)=\delta_{1/\eps}x$, where $x=(x_1,\dots,x_n)$ is a system of privileged coordinates centered at $p$. Hence the question reduces to the weak-$\ast$ convergence of the measures
\begin{equation*}
\eta_\eps=(f_\eps)_\sharp\left(\hh^Q_{d/\eps}\llcorner_{B^d(p,{R\eps})}\right),
\end{equation*}
to the measure $\hh_{\widehat d_p}^Q\llcorner_{B^{\widehat d_p}(0,R)}$.

Let us proceed as in Proposition~\ref{sfer}. There holds
\begin{eqnarray*}
\lim_{\eps\to 0}\int_{B^{\widehat d_p}(0,R)}h\,d\eta_\eps&=&\lim_{\eps\to 0}\frac{1}{\eps^Q}\int_{B^{\widehat d_p}(0,{R\eps(1+o(1))})}h(\delta_{1/\eps}x) d\hh^Q(x)\\
&=&\lim_{\eps\to 0}\int_{B^{\widehat d_p}(0,{R(1+o(1))})}h(y) \frac{d\hh^Q}{d\mu}(\delta_\eps y)\omega(\partial_{y_1},\dots, \partial_{y_n})|_{\delta_\eps y}\, d{\mathcal L}(y),
\end{eqnarray*}
whenever one of the limits above exists. At this point, in order to continue one needs to assess the existence and value of the limit
\begin{equation}\label{iop}
\lim_{\eps\to 0}\frac{d\hh^Q}{d\mu}(\delta_\eps y).
\end{equation}
In particular, concluding as in Proposition~\ref{sfer} we have the following sufficient conditions for measured Gromov--Hausdorff convergence.
\begin{proposition}
\label{prop:haus}
Under the assumptions of Proposition~\ref{yui}, if
 the function $x\mapsto \frac{d\hh^Q}{d\mu}(x)$ is continuous, then  $\eta_\eps\llcorner_{B^{\widehat d_p}(0,R)}$ weak-$\ast$ converges to $ \frac{d\hh^Q}{d\mu}(0)\mup\llcorner_{B^{\widehat d_p}(0,R)}$. If moreover $\frac{d\hh^Q}{d\mu}(x)=\frac{\hh^Q_{\widehat d_x}(\widehat B_x)}{\widehat\mu (\widehat B_x)}$, then $(M, d/\eps, \hh^Q_{d/\eps},p)$ converges to $(T_pM, \widehat d_p,  \hh_{\widehat d_p}^Q, 0)$ in the measured Gromov--Hausdorff sense.
\end{proposition}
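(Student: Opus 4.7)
The proof proceeds in two steps, closely following the strategy of Proposition~\ref{sfer}. The first step establishes the weak-$*$ convergence of the measures $\eta_\eps \llcorner_{B^{\widehat d_p}(0,R)}$, starting from the formula derived just above the statement: for every continuous $h:T_pM\to\R$,
\begin{equation*}
\lim_{\eps\to 0}\int_{B^{\widehat d_p}(0,R)} h \, d\eta_\eps = \lim_{\eps\to 0}\int_{B^{\widehat d_p}(0,R(1+o(1)))} h(y)\, \frac{d\hh^Q}{d\mu}(\delta_\eps y)\, \omega(\partial_{y_1},\dots,\partial_{y_n})|_{\delta_\eps y}\, d{\mathcal L}(y),
\end{equation*}
whenever one of the two limits exists. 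Under the continuity hypothesis on $x\mapsto \frac{d\hh^Q}{d\mu}(x)$, the function $y\mapsto \frac{d\hh^Q}{d\mu}(\delta_\eps y)$ converges uniformly on the relatively compact set $\overline{B}^{\widehat d_p}(0,R+1)$ to the constant $\frac{d\hh^Q}{d\mu}(0)$ as $\eps\to 0$, since $\delta_\eps y \to p$ in $M$. Combining this uniform convergence with \eqref{uti} and the boundedness of $h$ and of $\omega(\partial_{y_1},\dots,\partial_{y_n})$ on the relevant compact set, I can pass to the limit in the integral and obtain
\begin{equation*}
\lim_{\eps\to 0}\int_{B^{\widehat d_p}(0,R)} h \, d\eta_\eps = \frac{d\hh^Q}{d\mu}(0) \int_{B^{\widehat d_p}(0,R)} h \, d\mup,
\end{equation*}
which is the asserted weak-$*$ convergence of $\eta_\eps \llcorner_{B^{\widehat d_p}(0,R)}$ to $\frac{d\hh^Q}{d\mu}(0)\, \mup \llcorner_{B^{\widehat d_p}(0,R)}$.

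For the second statement, I would invoke the additional hypothesis $\frac{d\hh^Q}{d\mu}(x) = \hh^Q_{\widehat d_x}(\widehat B_x)/\widehat\mu^x(\widehat B_x)$ at $x=p$, together with the Carnot group structure of $(T_pM,\widehat d_p)$ available at a regular point. Both $\hh^Q_{\widehat d_p}$ and $\mup$ are Haar measures on this group (the first because $\widehat d_p$ is left-invariant and Hausdorff measures are intrinsic to the metric; the second as already observed in the proof of Proposition~\ref{sfer}), hence they are proportional, with proportionality constant determined by evaluating on $\widehat B_p$. This gives precisely $\frac{d\hh^Q}{d\mu}(0)\, \mup = \hh^Q_{\widehat d_p}$, and the first step then yields weak-$*$ convergence to $\hh^Q_{\widehat d_p}\llcorner_{B^{\widehat d_p}(0,R)}$.

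Condition d) of Definition~\ref{mgh} for almost every $R>0$ then follows, while conditions a), b), c) are already verified for the same choice $f_\eps(x)=\delta_{1/\eps}x$ used in Proposition~\ref{smooth}. The main delicate point to address carefully is the justification of the interchange of limit and integral in the first step: it relies on the continuity of $\frac{d\hh^Q}{d\mu}$ holding in a full neighbourhood of $p$ in $M$ (not merely pointwise at $p$), so that $\frac{d\hh^Q}{d\mu}(\delta_\eps y)$ is uniformly close to $\frac{d\hh^Q}{d\mu}(0)$ for all $y$ in a compact ball. This is exactly what the hypothesis of the proposition provides. Apart from this, the argument is essentially a combination of the computational steps already carried out for Propositions~\ref{smooth} and~\ref{sfer}, and no genuinely new obstacle appears.
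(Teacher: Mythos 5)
Your argument is correct and is essentially the paper's own: the paper gives no separate proof but derives the proposition by the displayed computation preceding it together with the conclusion ``as in Proposition~\ref{sfer}'', which is exactly what you carry out --- uniform convergence of $\frac{d\hh^Q}{d\mu}(\delta_\eps y)$ to $\frac{d\hh^Q}{d\mu}(0)$ on compacts via the continuity hypothesis, the limit \eqref{uti}, and the identification $\frac{d\hh^Q}{d\mu}(0)\,\mup=\hh^Q_{\widehat d_p}$ from the proportionality of Haar measures on the Carnot group $T_pM$. No gaps.
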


Recall that, for spherical Hausdorff measures, to prove continuity of $\frac{d\ss^Q}{d\mu}$, the idea is to compute the limit
\begin{equation}
\lim_{r\to 0}\frac{\ss^Q(B(x,r))}{\mu(B(x,r))},
\end{equation}
 which turns out to equal $\frac{\ss^Q_{\widehat d_x}(\widehat B_x)}{\hat \mu^x(\widehat B_x)}$ and show that the latter  is  continuous as a function of $x$.
The difficulty in estimating the quotient $\frac{\hh^Q(B(x,r))}{\mu(B(x,r))}$
is that one needs uniform estimates of $\hh^Q$ at any set of small diameter covering balls centered at points in a neighborhood of $p$. This is the main (and hard) problem when one passes from analysis of $\ss^Q$ to that of $\hh^Q$: coverings in the construction of $\ss^Q$ are made with balls (i.e. sets of points having bounded distance from a fixed one), whereas for $\hh^Q$ coverings are much more general and made with any set.

We do not know whether $\frac{d\hh^Q}{d\mu}(x)$ coincides with the ratio $\frac{\hh^Q_{\widehat d_x}(\widehat B_x)}{\hat\mu^x(\widehat B_x)}$. Neither do we know that the function  $x\mapsto \frac{\hh^Q_{\widehat d_x}(\widehat B_x)}{\hat\mu^x(\widehat B_x)}$ is continuous. Nevertheless, we can relate last continuity to the notion of isodiametric constants in Carnot groups.

Indeed, we can study the Radon--Nikodym derivative of $\hh^Q$ w.r.t.\ $\ss^Q$ rather than $\mu$ since \eqref{densintro} implies
\begin{equation*}
\frac{d\hh^{Q}}{d\mu}(p)= \frac{2^Q}{\mup(\widehat B_p)} \frac{d\hh^{Q}}{d\ss^Q}(p).
\end{equation*}
Using Federer densities (see subsection~\ref{se:haus_intro}), the ratio
$\frac{d\ss^Q}{d\hh^{Q}}(p)$ is given by $\lim_{\eps \to 0} {\mathscr I}(p,\eps)$, where
\begin{equation*}{\mathscr I}(p,\eps)=\sup \left\{ \frac{\ss^Q(S)}{(\mathrm{diam} S)^{Q}} \ : \ S \hbox{ closed subset of }M, \  p \in S, \  0< \mathrm{diam} S < \eps  \right\}.
\end{equation*}

When the sub-Riemannian manifold itself has a structure of a Carnot group $\mathbb{G}$, the measures $\ss^Q$ and $\hh^Q$ on $\mathbb{G}$ are proportional as Haar measures and, by \cite[Proposition~2.3]{rigot}, they satisfy
\begin{equation}\label{dfgghj}
\ss^Q={\mathscr I} \hh^Q,
\end{equation}
where ${\mathscr I}$ is the so called {\it isodiametric constant} in the Carnot group defined by
\begin{equation}\label{ic}
{\mathscr I}=\sup\left\{\frac{\ss^Q(A)}{(\diam A)^Q} \mid 0<\diam A <\infty, A\subset \mathbb{G}\right\},
\end{equation}
and the diameter is computed with respect to the sub-Riemannian distance $d$ in $\mathbb{G}$. Hence, for $M=\mathbb{G}$,  ${\mathscr I}(p,\eps)={\mathscr I}$ and it is independent of $p$ and $\eps$.

At a regular point $p$, the nilpotent approximation $(T_pM,\widehat d_p)$ has a structure of a Carnot group $\mathbb{G}_p$. Note that the Carnot group may vary (both from an algebraic and metric viewpoint) as $p$ varies in $M$. Its isodiametric constant ${\mathscr I}_p$ satisfies
\begin{equation*}
{\mathscr I}_p= \frac{\ss_{\widehat d_p}^Q(\widehat B_p)}{\hh_{\widehat d_p}^Q(\widehat B_p)} \quad \hbox{and} \quad
\frac{\hh_{\widehat d_p}^Q(\widehat B_p)}{\mup(\widehat B_p)}=  \frac{2^Q}{{\mathscr I}_p\mup(\widehat B_p)}.
\end{equation*}

We then have the following characterization of the conditions of Proposition~\ref{prop:haus}.

\begin{proposition}
\label{ciprop}
Under the assumptions of Proposition~\ref{yui}, there holds $\frac{d\hh^Q}{d\mu}(p)=\frac{\hh^Q_{\widehat d_p}(\widehat B_p)}{\hat\mu^p (\widehat B_p)}$ if and only if ${\mathscr I}(p,\eps)$ converges to ${\mathscr I}_p$ as $\eps \to 0$. Moreover,
the function
\begin{equation*}
p\mapsto\frac{\hh_{\widehat d_p}^Q(\widehat B_p)}{\mup(\widehat B_p)}
\end{equation*}
is continuous if and only if the isodiametric constant $p\mapsto {\mathscr I}_p$ is continuous.
\end{proposition}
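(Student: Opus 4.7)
The plan is to deduce both equivalences as purely algebraic consequences of the chain rule for Radon--Nikodym derivatives combined with the explicit identities already collected in Section~\ref{se:haus_intro}. On the regular open set $U$, the three measures $\mu$, $\ss^Q$ and $\hh^Q$ are pairwise mutually absolutely continuous Radon measures (Proposition~\ref{yui} together with the properties recalled at the beginning of Section~\ref{se:haus_intro}), so the chain rule $\frac{d\hh^Q}{d\mu}=\frac{d\hh^Q}{d\ss^Q}\cdot\frac{d\ss^Q}{d\mu}$ holds pointwise on $U$ and will be the workhorse of both parts of the proof.

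For the first equivalence, I would insert into this chain rule the two available expressions: the Federer-density formula $\frac{d\ss^Q}{d\hh^Q}(p)=\lim_{\eps\to 0}{\mathscr I}(p,\eps)$ recalled in Section~\ref{se:haus_intro}, together with \eqref{densintro}. This yields
\[
\frac{d\hh^Q}{d\mu}(p) \;=\; \frac{2^Q}{\mup(\widehat B_p)\,\lim_{\eps\to 0}{\mathscr I}(p,\eps)}.
\]
A direct comparison with the identity $\frac{\hh^Q_{\widehat d_p}(\widehat B_p)}{\mup(\widehat B_p)}=\frac{2^Q}{{\mathscr I}_p\,\mup(\widehat B_p)}$ displayed just above the statement, combined with the strict positivity $\mup(\widehat B_p)>0$, reduces the asserted equality to the single scalar identity $\lim_{\eps\to 0}{\mathscr I}(p,\eps)={\mathscr I}_p$, which is exactly the equivalence claimed.

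For the second equivalence, I would exploit the same identity $\frac{\hh^Q_{\widehat d_p}(\widehat B_p)}{\mup(\widehat B_p)}=\frac{2^Q}{{\mathscr I}_p\,\mup(\widehat B_p)}$ and factor the denominator into a continuous piece and a potentially discontinuous piece. The map $p\mapsto \mup(\widehat B_p)$ is continuous on $U$: by \eqref{densintro} it equals $2^Q/\frac{d\ss^Q}{d\mu}(p)$, and property (v) of Section~\ref{se:haus_intro} ensures that $\frac{d\ss^Q}{d\mu}$ is continuous, while mutual absolute continuity of $\ss^Q$ and $\mu$ makes it strictly positive. Moreover ${\mathscr I}_p\in[1,2^Q]$ by \eqref{SHS} applied inside the tangent Carnot group, so it is bounded and bounded away from zero. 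Consequently, continuity of the product ${\mathscr I}_p\mup(\widehat B_p)$, and hence of its reciprocal times the constant $2^Q$, is equivalent to continuity of $p\mapsto{\mathscr I}_p$, which is the claim.

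Since the whole argument is essentially a reshuffling of identities proved earlier in the paper, no genuinely analytic obstacle arises; the only delicate point to verify carefully is that $p\mapsto\mup(\widehat B_p)$ is continuous and nowhere vanishing, so that it can be divided out cleanly in both equivalences.
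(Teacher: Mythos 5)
Your proof is correct and follows exactly the route the paper intends: Proposition~\ref{ciprop} is stated there without a formal proof, as an immediate consequence of the chain rule $\frac{d\hh^Q}{d\mu}=\frac{d\hh^Q}{d\ss^Q}\cdot\frac{d\ss^Q}{d\mu}$, the Federer-density identity $\frac{d\ss^Q}{d\hh^{Q}}(p)=\lim_{\eps \to 0} {\mathscr I}(p,\eps)$, formula \eqref{densintro}, and the relation $\ss^Q_{\widehat d_p}={\mathscr I}_p \hh^Q_{\widehat d_p}$ — precisely the reshuffling you carry out. The two points you flag as needing care (continuity and strict positivity of $p\mapsto\mup(\widehat B_p)$, and the bounds ${\mathscr I}_p\in[1,2^Q]$ from \eqref{SHS}) are indeed the only things to verify, and your justifications of both are sound.
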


Unfortunately, very little is known about isodiametric constants in Carnot groups and even less is known about isodiametric sets, that are sets realizing the supremum in \r{ic}.

In this connection let us mention \cite{rigot}, where the author proves that in Carnot groups with sub-Riemannian distances  isodiametric sets  exist. Moreover, under some assumption on the sub-Riemannian distance called condition $({\mathscr C})$,  it is proved that balls are not isodiametric (see Theorem~3.6 in \cite{rigot}).
 By definition, condition  $({\mathscr C})$ is satisfied on a Carnot group $\mathbb{G}$ with a sub-Riemannian distance if for some (and hence any) point $x\in\mathbb{G}$ there exists $y\in\mathbb{G}\setminus\{x\}$ such that one can find a length-minimizing curve $\gamma:[a, b]\to\mathbb{G}$ from $x$ to $y$ which ceases to be length-minimizing after $y$.
The latter means that, if $T>0$ and $c: [a,b + T]\to\mathbb{G}$ is any horizontal curve such that $c|_{[a,b]}=\gamma$ then $c$ is not length-minimizing on $[a, b + T ]$.
Such condition is actually satisfied by Carnot group under a mild assumption on abnormal length-minimizers\footnote{See \cite{ABB} or \cite{Rifford2014} for the definition of abnormal and strictly abnormal minimizers.}.
 \begin{proposition}
 Let $\mathbb{G}$ be a Carnot group endowed with a sub-Riemannian distance. Assume that $\mathbb{G}$ is not the Euclidean space and does not admit strictly abnormal length-minimizers. Then condition $({\mathscr C})$ is satisfied. As a consequence, balls are not isodiametric or, equivalently, the isodiametric constant ${\mathscr I}$ in $\mathbb{G}$ satisfies ${\mathscr I}>1$.
 \end{proposition}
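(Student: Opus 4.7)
The plan is to prove condition $({\mathscr C})$ by contradiction and then derive the ``as a consequence'' assertion from Theorem~3.6 of \cite{rigot}. Suppose that $({\mathscr C})$ fails. By the left-invariance of $d$, its failure is equivalent to the statement that every length-minimizing curve starting at the identity $e$ admits a strictly longer minimizing extension. Iterating and passing to a limit (using that a uniform limit of arc-length parametrized minimizers is itself a minimizer), every length-minimizing geodesic from $e$ is the restriction of a globally length-minimizing ray $\tilde\gamma:[0,\infty)\to\mathbb{G}$ satisfying $d(e,\tilde\gamma(t))=t$ for all $t\geq 0$. Since by hypothesis $\mathbb{G}$ admits no strictly abnormal length-minimizer, each such $\tilde\gamma$ is (the projection of) a normal extremal. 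Combined with the standard fact that any normal geodesic is locally length-minimizing at its starting time, this forces every normal geodesic issuing from $e$ to be length-minimizing on all of $[0,\infty)$.

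The next task is to contradict this by producing a normal geodesic from $e$ which ceases to be length-minimizing at some finite time. Since $\mathbb{G}$ is not Euclidean, the stratification $\mathfrak{g}=V_1\oplus\cdots\oplus V_r$ of its Lie algebra has step $r\geq 2$. Pick $X,Y\in V_1$ such that $Z:=[X,Y]\neq 0$, so $Z\in V_2$. Working in exponential coordinates adapted to the stratification, I would consider the normal geodesic $\gamma_{\lambda_0}$ whose initial covector $\lambda_0$ is metric-dual to $X$ on $V_1$ and has a non-zero component $\tau$ along $Z^*\in V_2^*$. Writing out the Hamiltonian equations in these coordinates, the horizontal velocity of $\gamma_{\lambda_0}$ performs a rotation in the $(X,Y)$-plane driven by $\tau$, exactly as in the Heisenberg group. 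Exploiting the involution $(x_1,x_2)\mapsto(-x_1,-x_2)$ on the horizontal coordinates (which fixes $Z$), a second normal geodesic of the same length reaches the endpoint $\gamma_{\lambda_0}(T)$ at a finite first-return time $T>0$. By the standard corner-cutting argument, the existence of two distinct minimizers of length $T$ to the same point forces any extension of $\gamma_{\lambda_0}$ beyond $T$ to be strictly longer than the distance, contradicting the conclusion of the first paragraph. Hence $({\mathscr C})$ must hold.

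The main obstacle in the second paragraph is to make the Heisenberg-style reduction rigorous in a general non-Euclidean Carnot group, since the Hamiltonian flow genuinely couples all layers $V_2,\dots,V_r$ and the symmetric second minimizer need not come from a global automorphism of the ambient group. A conceptually cleaner alternative is to show directly that the sub-Riemannian cut locus of $e$ in $\mathbb{G}$ is non-empty whenever $\mathbb{G}$ is not Euclidean, e.g.\ by contradicting the injectivity of the sub-Riemannian exponential at time $1$ from the level set $\{H=1/2\}\subset T_e^*\mathbb{G}$ of the Hamiltonian onto the unit sphere $\{q\in\mathbb{G}\,:\,d(e,q)=1\}$ (the domain being non-compact for $r\geq 2$ while the target is compact); any point at which two distinct normal geodesics of length $1$ meet is then a cut point past which neither geodesic remains minimizing, giving the witness for $({\mathscr C})$.

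Finally, once $({\mathscr C})$ is established, Theorem~3.6 of \cite{rigot} recalled above yields that balls in $\mathbb{G}$ are not isodiametric. By the identity $\ss^Q={\mathscr I}\hh^Q$ from \eqref{dfgghj}, the equality ${\mathscr I}=1$ would imply $\ss^Q=\hh^Q$, so that balls would attain the supremum defining ${\mathscr I}$ in \eqref{ic}; hence ${\mathscr I}>1$.
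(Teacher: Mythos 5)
Your overall architecture (establish $(\mathscr C)$, then invoke Theorem~3.6 of \cite{rigot} and the identity $\ss^Q={\mathscr I}\hh^Q$ to get ${\mathscr I}>1$) matches the paper's, but the proof of $(\mathscr C)$ itself has a genuine gap at its heart. Everything hinges on the existence, in a non-Euclidean Carnot group, of a length-minimizer that admits no minimizing extension --- equivalently, a normal geodesic that loses optimality in finite time. The paper obtains this in one line by quoting Theorem~12.17 of \cite{ABB}, which asserts precisely that a Carnot group different from the Euclidean space admits a normal extremal trajectory that is minimizing on some $[a,b]$ but on no interval $[a,b+T]$; the rest of the paper's argument is then a direct two-line deduction (any minimizing extension $c$ of that arc would be an extremal; it cannot be normal, so it is strictly abnormal, which the hypothesis forbids). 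You instead try to manufacture this geodesic from scratch, and both of your routes are incomplete. The Heisenberg-style reduction fails for the reason you yourself identify: the reflection $(x_1,x_2)\mapsto(-x_1,-x_2)$ on the first layer need not extend to an isometry of a general Carnot group fixing the target endpoint, and the Hamiltonian flow genuinely mixes all the layers. Your fallback topological argument is invalid as stated: a continuous injection from the non-compact cylinder $\{H=1/2\}$ into the compact unit sphere is not in itself a contradiction (compare $[0,1)\hookrightarrow S^1$); to conclude non-injectivity you would need surjectivity \emph{plus} properness or a closed-map property of the exponential at time $1$, and establishing that is essentially as hard as the statement you are trying to prove. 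Citing the known result, as the paper does, is the intended fix.

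A secondary (repairable) weak point is the step ``every minimizing ray from $e$ is normal, hence every normal geodesic issuing from $e$ minimizes on all of $[0,\infty)$.'' A short minimizing arc $\gamma_\lambda|_{[0,\epsilon]}$ can admit several distinct normal lifts (exactly when it is also an abnormal trajectory), so the normal ray $\tilde\gamma$ extending it need not be the continuation of the Hamiltonian flow of $\lambda$; your asserted conclusion about \emph{every} normal geodesic does not follow without an additional uniqueness argument. This is the same subtlety the paper's phrase ``and actually a strictly abnormal one'' is quietly handling, and it can be dealt with, but as written your contradiction is not yet reached even granting the existence statement. The final paragraph deducing ${\mathscr I}>1$ from \eqref{dfgghj}, \eqref{ic} and the non-isodiametricity of balls is fine and agrees with the paper.
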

\begin{proof}
A Carnot group different from the Euclidean space admits at least one normal extremal trajectory $\gamma$ which is length-minimizing on an interval $[a,b]$ but not on $[a,b + T]$ for any $T>0$ (see for instance Theorem~12.17 in \cite{ABB}). Thus if $c: [a,b + T]\to\mathbb{G}$ is a length-minimizer coinciding with $\gamma$ on $[a,b]$, it must be an abnormal extremal trajectory on $[a,b + T]$, and actually a strictly abnormal one. Our assumption on $\mathbb{G}$ excludes the existence of a such a  $c$.
\end{proof}

We also cite  \cite{LRV}, where the analysis is specified to the Heisenberg group. In this framework, the authors show that isodiametric sets have Lipschitz boundary and they provide explicit solutions to a constrained isodiametric problem (obtained by restricting the supremum in \r{ic} to the class of spherically symmetric sets).

\section{Popp's measure in non equiregular manifolds}
\label{se:popp}

In this section we briefly recall the construction of Popp's measure and its main properties. We then extend the construction to equisingular submanifolds and provide two new notions of Popp's measure on sub-Riemannian manifolds having singular points.

\subsection{Popp's measure on the set of regular points}

Let $(M,\bD,g)$ be an oriented sub-Riemannian manifold and let $U\subset M$ be a connected component of the set of regular points. We follow the construction of Popp's measure given in \cite[Section 10.6]{montgomery} which is based on the two algebraic facts given below.
\begin{lemma}\label{alg}
\bi
\iii[a)] Let $E$ be an inner product space, and let $\pi:E\to V$ be a surjective linear map. Then $\pi$  induces an inner product on $V$ such that the associated norm is
\begin{equation*}
|v|_V=\min\{|e|_E\mid \pi(e)=v\}.
\end{equation*}
\iii[b)] Let $E$ be a vector space of dimension $n$ with a flag of linear subspaces
\begin{equation*}
\{0\}=F^0\subset F^1\subset F^2\subset \dots\subset F^r=E,
\end{equation*}
and let
\begin{equation*}
\mathfrak{gr}(F)=F^1\oplus F^2/F^1\oplus\dots\oplus F^{r}/F^{r-1},
\end{equation*}
 be the associated graded vector space. Then there is a canonical isomorphism $\theta:\Lambda^n E^* \to \Lambda^n\mathfrak{gr}(F)^*$.
\ei
\end{lemma}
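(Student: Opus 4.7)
Part (a) is an exercise in quotient inner products. My plan is to decompose $E = K \oplus K^\perp$ orthogonally, where $K = \ker \pi$, and to observe that the restriction $\pi|_{K^\perp} : K^\perp \to V$ is a linear isomorphism. I would transport the inner product from $K^\perp$ to $V$ along this isomorphism, so that $\pi|_{K^\perp}$ becomes an isometry by construction. The norm formula then follows because any preimage $e \in \pi^{-1}(v)$ decomposes orthogonally as $e = e_0 + k$ with $e_0 \in K^\perp$ and $k \in K$, giving $|e|_E^2 = |e_0|_E^2 + |k|_E^2 \geq |v|_V^2$ with equality if and only if $k = 0$, so the minimum is realized at the unique preimage in $K^\perp$.

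For part (b), the plan is to define $\theta$ in terms of an ordered adapted basis of $E$ and then show the definition does not depend on the choice. I would pick $(e_1, \dots, e_n)$ such that, for every $s \in \{1,\dots,r\}$, the first $\dim F^s$ vectors form a basis of $F^s$. Each $e_i$ has a well-defined weight $s_i$, namely the smallest $s$ with $e_i \in F^s$, and its class $[e_i] \in F^{s_i}/F^{s_i-1}$ is a nonzero vector in a graded summand of $\mathfrak{gr}(F)$. The tuple $([e_1], \dots, [e_n])$ is then a basis of $\mathfrak{gr}(F)$. I would define $\theta(\omega)$ as the unique element of $\Lambda^n \mathfrak{gr}(F)^*$ with $\theta(\omega)([e_1],\dots,[e_n]) = \omega(e_1, \dots, e_n)$.

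The only non-trivial point, and the one I expect to be the main obstacle, is to show that $\theta(\omega)$ is independent of the adapted basis. Given a second adapted basis $(e'_1,\dots,e'_n)$, write $e'_i = \sum_j A_{ij} e_j$. Compatibility with the flag forces $A_{ij} = 0$ whenever $s_j > s_i$, so $A$ is block upper triangular with diagonal blocks $A^{(s)}$ indexed by the vectors of weight $s$. On one side, $\omega(e'_1, \dots, e'_n) = (\det A)\, \omega(e_1, \dots, e_n)$. On the other side, the induced change of basis on $\mathfrak{gr}(F)$ is block diagonal: in each quotient $F^{s}/F^{s-1}$ the contributions $A_{ij}$ with $s_j < s_i$ are killed, while those with $s_j > s_i$ do not appear, so the induced basis change is $\bigoplus_s A^{(s)}$ and has determinant $\prod_s \det A^{(s)} = \det A$. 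The two rescalings match, proving well-definedness. Linearity of $\theta$ is then immediate and, since both $\Lambda^n E^*$ and $\Lambda^n \mathfrak{gr}(F)^*$ are one-dimensional and $\theta$ carries a generator to a generator, $\theta$ is a canonical linear isomorphism.
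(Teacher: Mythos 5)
Your proof is correct. Note that the paper itself offers no proof of this lemma: it is quoted as a pair of known algebraic facts underlying Montgomery's construction of Popp's volume (Section 10.6 of \cite{montgomery}), so there is no in-text argument to compare against. Your part (a) is the standard argument; the only implicit hypothesis is that $E$ is finite-dimensional (or at least that $\ker\pi$ is closed and complemented), which holds in every application made in the paper since $E=\otimes^i\bD_p$, and which is what guarantees both the orthogonal splitting $E=\ker\pi\oplus(\ker\pi)^\perp$ and the attainment of the minimum. For part (b), your basis-dependent definition together with the comparison of the block-upper-triangular change of basis on $E$ with its block-diagonal shadow on $\mathfrak{gr}(F)$ is a complete verification of well-definedness, including the point that permutations or rescalings within a single weight block affect both determinants identically. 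The usual coordinate-free route is to iterate the canonical isomorphism $\Lambda^{\mathrm{top}}E\cong\Lambda^{\mathrm{top}}F\otimes\Lambda^{\mathrm{top}}(E/F)$ attached to a short exact sequence $0\to F\to E\to E/F\to 0$ along the flag, obtaining $\Lambda^{n}E\cong\bigotimes_{s}\Lambda^{\mathrm{top}}\bigl(F^{s}/F^{s-1}\bigr)\cong\Lambda^{n}\mathfrak{gr}(F)$ and then dualizing; your determinant computation is exactly the matrix expression of that isomorphism. The structural version makes canonicity manifest without any basis-independence check, while yours is more elementary and immediately yields the formula $\theta(\omega)([e_1],\dots,[e_n])=\omega(e_1,\dots,e_n)$ that is actually used when computing Popp's volume in an adapted frame.
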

  Let $p\in U$ and let $r(p)$ the first integer such that $\bD^{r(p)}=T_pM$.  Popp's measure is the smooth volume associated with a volume form which is built by a suitable choice of inner product structure on the graded vector space
  \begin{equation*}
\mathfrak{gr}_p(\bD)=\bD^1_p\oplus\bD^2_p/\bD^1_p\oplus\dots\oplus\bD^{r(p)}_p/\bD^{r(p)-1}_p.
\end{equation*}

Let us detail the construction. For every $i=1,\dots, r(p)$ consider the linear map
$\pi_i:\otimes^i\bD_p\to\bD_p^i/\bD^{i-1}_p$ given by
\begin{equation}
\label{eq:pii}
\pi_i(v_1\otimes\dots\otimes v_i)=[V_1,[V_2,\dots, [V_{i-1}, V_i]]](p) \mod \bD^{i-1}_p,
\end{equation}
where $V_1,\dots, V_i$ are smooth sections of $\bD$ satisfying $V_j(p)=v_j$. Such maps are well defined, i.e., they do not depend on the choice of  $V_1,\dots, V_i$ and are surjective. Apply   Lemma~\ref{alg} a) to $E=\otimes^i\bD_p$,   $V=\bD_p^i/\bD^{i-1}_p$ and $\pi=\pi_i$. Then, for every $i=1,\dots r(p)$, $\pi_i$ endows $\bD^i_p/\bD^{i-1}_p$  with an inner   product space  structure.
As a consequence, the graded vector space $\mathfrak{gr}_p(\bD)$ becomes an inner product space as well. Taking the wedge product of the elements of an orthonormal dual basis of $\mathfrak{gr}_p(\bD)$, we obtain an element $\mathscr{V}_p\in\Lambda^n\mathfrak{gr}_p(\bD)^*$, which is defined up to a sign. Finally consider the map $\theta_p: \Lambda^n T^*_pM \to \Lambda^n\mathfrak{gr}_p(\bD)^*$ obtained by applying  Lemma~\ref{alg} b) to $E=T_pM$, $F=\bD_p$. The element $\varpi_p=\theta_p^{-1}(\mathscr{V}_p)$ in $\Lambda^n T^*_pM $ is defined canonically up to a sign.

Moreover, near a regular point the graded vector space $\mathfrak{gr}_p(\bD)$ varies smoothly, and so does the inner product that we defined on $\mathfrak{gr}_p(\bD)$. As a consequence $p \mapsto \varpi_p$ defines locally a volume form, and then  by a standard gluing argument a volume form $\varpi$ on the whole (oriented) connected component $U$.
Popp's measure is the smooth volume associated with $\varpi$.
We denote it by $\mathscr{P}^U$ and, with an abuse of notation we keep the symbol $\mathscr{P}^U$ for the measure on $M$ which coincides with the former on $U$ and is $0$ elsewhere. Finally, we set  $\mathscr{P}^{\Reg}=\sum_{U}\mathscr{P}^U$, where $U$ varies among all connected components of $\Reg$. When $M=\Reg$ is equiregular, $\mathscr{P}^{\Reg}$ is known in literature as Popp's measure.

By construction, $\mathscr{P}^\Reg$ is a smooth volume on $\Reg$. Its Radon-Nikodym derivative with respect to the top-dimensional spherical Hausdorff measure\footnote{Recall that $q\mapsto Q(q)$ is locally constant on $\Reg$.} may be computed by \eqref{densintro} applied to $\mu=\mathscr{P}$,
\begin{equation*}
\frac{d\mathscr{P}^{\Reg}}{d\ss^{Q(q)}}(q)=\frac{\widehat{\mathscr{P}}_q(\widehat B_q)}{2^{Q(q)}},~~\forall q\in \Reg,
\end{equation*}
where $\widehat{\mathscr{P}}_q$ denote the Popp measure in the nilpotent approximation at $q$.

Using the explicit formula in \cite{br} for Popp's measure we can provide a weak equivalent of the Radon-Nikodym derivative of $\mathscr{P}^{\Reg}$ with respect to a smooth volume  on (any) sub-Riemannian manifold.
To do so, let us introduce first some notations.

We say that a family of vector fields $X_1,\dots, X_m$ is a  \emph{(global) generating family} for the sub-Riemannian structure $(\bD, g)$ if $\bD$ is globally generated by $X_1,\dots, X_m$ and
\begin{equation*}
g_q(v)=\inf\left\{\sum_{i=1}^m u_i^2~\Big\vert~  v=\sum_{i=1}^m u_iX_i(q)\right\} \qquad \hbox{for every } q \in M, \ v \in T_q M.
\end{equation*}
The existence of a generating family is a consequence of \cite[Corollary~3.26]{ABB}. Given such a family, for every integer $s>0$ the set of iterated Lie bracket $[X_{i_1},[X_{i_2},[
 \dots,[X_{i_{j-1}},X_{i_j}]\dots]]$ of length $j \leq s$ generates the module $\bD^s$. We say that $(Y_1,\dots,Y_n)$ is \emph{a frame of brackets adapted at $p$} if $(Y_1(p),\dots,Y_n(p))$ is a basis of $T_pM$, every $Y_i$ is an iterated Lie bracket of $X_1,\dots, X_m$, and the sum, over all $i$'s, of the lengths of the brackets $Y_i$'s equals $Q(p)$ (the latter condition guarantees that the basis is adapted to the flag \eqref{flagintro}).


Given a generating family of $(\bD, g)$ and a smooth volume $\mu$ on $M$ associated with a non-degenerate volume form $\omega$, we define the function $q\mapsto\nu(q)$ as
\begin{equation*}
{\nu(q)} = \max\{  \omega_q(Y_1(q),\dots,Y_n(q)) \mid (Y_1,\dots, Y_n) \textrm{ frame of brackets adapted at } q\}.
\end{equation*}
Note that $\nu(q) \geq 0$ since $\omega_q(Y_1(q),Y_2(q),\dots)=-\omega_q(Y_2(q),Y_1(q),\dots) $ and actually $\nu(q) >0$ since there exist frames of brackets at any point $q$. The function $\nu$ is continuous at regular points but not at singular ones. Indeed, if $(q_k)$ is a sequence of regular points converging to a singular point $\bar q$, then $\nu(q_k) \to 0$, whereas $\nu(\bar q)>0$.

\begin{proposition}\label{weq}
For any compact subset $K \subset M$ there exists a constant
$C>0$ such that, for every $q \in \Reg \cap K$,
\begin{equation}
\label{fweq}
\frac{1}{C \nu(q)} \leq \frac{d\mathscr{P}^{\Reg}}{d\mu}(q) \leq  \frac{C}{\nu(q)}.
\end{equation}
\end{proposition}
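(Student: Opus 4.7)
My approach is to combine the explicit formula for Popp's volume proved in \cite{br} with a compactness reduction to a finite family of candidate frames of brackets. Fix a generating family $X_1,\dots,X_m$ for $(\bD,g)$ on a neighbourhood of $K$, and let $\mathcal F$ denote the finite collection of iterated Lie brackets of $X_1,\dots,X_m$ of length at most the uniform nonholonomic step on $K$; any adapted frame of brackets at a point of $K$ is drawn from $\mathcal F$. For such a frame $\mathbf Y=(Y_1,\dots,Y_n)\subset\mathcal F$ at a regular point $q$, the formula of \cite{br} reads
\begin{equation*}
\varpi_q(Y_1,\dots,Y_n)=\sqrt{\det G_{\mathbf Y}(q)},
\end{equation*}
where $G_{\mathbf Y}(q)$ is the block-diagonal Gram matrix of the projected classes $\bar Y_i(q)\in\bD^{w_i}_q/\bD^{w_i-1}_q$ in the inner product on $\mathfrak{gr}_q(\bD)$ built from $X_1,\dots,X_m$ via Lemma~\ref{alg}a). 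Dividing $\varpi$ by $\omega$ produces the frame-independent identity
\begin{equation*}
\frac{d\mathscr{P}^{\Reg}}{d\mu}(q)=\frac{\sqrt{\det G_{\mathbf Y}(q)}}{\omega_q(Y_1(q),\dots,Y_n(q))}.
\end{equation*}

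I then select at each $q\in K\cap\Reg$ a frame $\mathbf Y^*(q)\in\mathcal F^n$ realising $\nu(q)=\omega_q(Y^*_1(q),\dots,Y^*_n(q))$, so that the bound \eqref{fweq} reduces to the uniform estimate
\begin{equation*}
c_K\le\sqrt{\det G_{\mathbf Y^*(q)}(q)}\le C_K,\qquad\forall\,q\in K\cap\Reg.
\end{equation*}
The upper bound is immediate: writing each $\bar Y_i(q)=\pi_{w_i}(X_{j_1}(q)\otimes\cdots\otimes X_{j_{w_i}}(q))$ and using Lemma~\ref{alg}a) together with $\|X_j(q)\|_g\le 1$, every diagonal entry of $G_{\mathbf Y^*(q)}(q)$ is at most~$1$; Hadamard's inequality applied block by block then yields $\det G_{\mathbf Y^*(q)}(q)\le 1$.

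The lower bound is the main obstacle, since the Popp inner product on $\mathfrak{gr}_q(\bD)$ depends on the dimensions $\dim\bD^i_q$ and is generally discontinuous as $q$ approaches $\Sing$. My strategy is a compactness-and-contradiction argument: if $\sqrt{\det G_{\mathbf Y^*(q_k)}(q_k)}\to 0$ along some sequence $q_k\in K\cap\Reg$, the finiteness of $\mathcal F^n$ lets one extract a subsequence with $\mathbf Y^*(q_k)\equiv\mathbf Y_0$ and $q_k\to q_*\in K$. When $q_*\in\Reg$, the smoothness of $\varpi$ at $q_*$ gives a strictly positive limit, a contradiction. When $q_*\in\Sing$, one passes to privileged coordinates at $q_*$: both $\omega_q(Y_{\mathbf Y_0}(q))$ and $\sqrt{\det G_{\mathbf Y_0}(q)}$ admit, as polynomials in the structure constants of $X_1,\dots,X_m$, homogeneous leading terms under the dilations $\delta_\lambda$ of matching weight $w_1+\cdots+w_n$, and the maximality of $\mathbf Y_0$ within $\mathcal F^n$ forces this common leading term to be non-zero. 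This matches the rates at which $\nu(q_k)\to 0$ and $d\mathscr{P}^\Reg/d\mu(q_k)\to\infty$ and yields the required lower bound $c_K>0$.
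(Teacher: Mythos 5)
Your reduction of \eqref{fweq} to a two-sided uniform bound on $\sqrt{\det G_{\mathbf Y^*(q)}(q)}$ is sound and is essentially the route the paper takes (the paper works with the matrices $B_j$ of \cite{br}, whose determinants are the reciprocals of the determinants of your Gram blocks), and your upper bound via the quotient-norm property of Lemma~\ref{alg}a) and Hadamard's inequality is correct. The genuine gap is the lower bound in the case where the limit point $q_*$ is singular. The argument via ``homogeneous leading terms under $\delta_\lambda$ in privileged coordinates at $q_*$'' does not hold up: the points $q_k$ approach $q_*$ in an arbitrary manner, not along a dilation orbit, so homogeneity under $\delta_\lambda$ gives no control on the sequence; $\sqrt{\det G_{\mathbf Y_0}}$ is not a polynomial; the weights at $q_k$ (regular) and at $q_*$ (singular) differ, so it is not even clear which grading the claimed homogeneity refers to; and the assertion that maximality of $\mathbf Y_0$ \emph{at the points $q_k$} forces a leading coefficient \emph{at $q_*$} to be nonzero is exactly the statement to be proved. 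As written, the final sentence of your argument restates the conclusion.

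The missing idea is that the maximality of $\mathbf Y^*(q)$ should be used pointwise and algebraically, with no limit-point analysis at all. Each Gram block satisfies $\det G_j(q)=1/\det B_j(q)$ with $B_j=I+D_j^{T}D_j$, where the entries of $D_j(q)$ are the coefficients, modulo $\bD^{j-1}_q$, of the remaining iterated brackets $Z\in\mathcal F$ of length $j$ in the basis $(Y^*_1(q),\dots,Y^*_n(q))$. By Cramer's rule each such coefficient equals $\omega_q(Y^*_1,\dots,Z,\dots,Y^*_n)/\omega_q(Y^*_1,\dots,Y^*_n)$; replacing a bracket of length $j$ by another bracket of length $j$ again produces a frame of brackets adapted at $q$ (or a linearly dependent family, giving $0$), so the numerator is at most $\nu(q)$ in absolute value while the denominator equals $\nu(q)$. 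Hence every entry of $D_j$ is bounded by $1$, $\det B_j$ is bounded above by a constant depending only on $n$, $m$ and the cardinality of $\mathcal F$, and $\det G_{\mathbf Y^*(q)}(q)\geq c>0$ uniformly. This is what the paper's proof does, except that it relaxes the exact maximizer to frames with $\omega_q(Y_1,\dots,Y_n)>\nu(q)/2$ so as to cover $\Reg$ by finitely many open sets $U_{\mathrm Y}$ on which the dimensions $n_j$ are constant and the relevant coefficients are smooth and bounded by $2$.
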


\begin{proof}
Recall first that if $p$ is a regular point, then $r(p) <n$. Hence only a finite number of families $(Y_1,\dots,Y_n)$ of iterated brackets may be frames of brackets adapted at regular points. Fix one of these families, $\mathrm{Y}=(Y_1,\dots,Y_n)$, and define $U_\mathrm{Y}$ as the set of points $p \in \Reg$ such that $\mathrm{Y}$ is a frame of brackets adapted at $p$ and $ \omega_p(Y_1(p),\dots,Y_n(p)) >\nu(p)/2$
(note that $\omega_p(Y_1(p),\dots,Y_n(p)) \leq \nu(p)$ by definition of $\nu$). This set $U_\mathrm{Y}$ is an open subset of $M$ (possibly empty) on which all dimensions $n_j(q)$, $j\in \N$, are constant. The union of all such $U_\mathrm{Y}$ covers $\Reg$.

Denoting by $dY_1, \dots, dY_n$ the  frame of $T^*M$ on $U_\mathrm{Y}$ dual to $\mathrm{Y}$, we write the $n$-form $\omega$ on $U_\mathrm{Y}$ as
\begin{equation*}
\omega = \omega(Y_1,\dots,Y_n) dY_1 \wedge \cdots \wedge dY_n.
\end{equation*}
Let us use now the characterization of Popp's measure given in \cite[Theorem 1]{br}: the volume form defining  Popp's measure, that we denote also $\mathscr{P}^{\Reg}$, is given on $U_\mathrm{Y}$  by
\begin{equation}
\label{eq:pomega}
\mathscr{P}^{\Reg} = \frac{1}{\sqrt{\prod_j \det B_j}} dY_1 \wedge \cdots \wedge dY_n = \frac{\omega}{ \omega(Y_1,\dots,Y_n)\sqrt{\prod_j \det B_j}},
\end{equation}
for some matrices $B_j$ defined from structure constants associated with $(Y_1,\dots,Y_n)$.
A careful analysis of these matrices $B_j$ (given by formula (4) in \cite{br}) shows that, for every $j \in \{1, \dots, r(p)\}$:
\begin{itemize}
  \item $B_j(q)=b_j^T(q) b_j(q)$ where $b_j(q)$ is a matrix with $n_j(p)-n_{j-1}(p)$ columns whose coefficients depend smoothly on $q$ in $U_\mathrm{Y}$;
  \item since we have chosen $Y_1,\dots,Y_n$ as iterated brackets, we can order the columns of the matrix $b_j$ so that it writes by block as
  $$
  b_j(q)=\begin{pmatrix}
      I \\
      D_j(q) \\
    \end{pmatrix}, \quad \hbox{where $I$ is the identity matrix of size $n_j(p)-n_{j-1}(p)$}.
  $$
\end{itemize}
As a consequence there exists a continuous nonnegative function $b$ on $U_\mathrm{Y}$ such that
\begin{equation*}
\frac{1}{1+ b(q)} \leq \frac{1}{\sqrt{\prod_j \det B_j}} \leq 1.
\end{equation*}
Using \eqref{eq:pomega} and the definition of $U_\mathrm{Y}$, we obtain
\begin{equation*}
 \frac{1}{(1+ b(q)) \nu(q)} \leq \frac{d\mathscr{P}^{\Reg}}{d\mu}(q) \leq  \frac{2}{\nu(q)} \quad \hbox{for every } q \in U_\mathrm{Y}.
\end{equation*}
The conclusion follows since $\Reg$ is the finite union of the subsets $U_\mathrm{Y}$, as the family $Y$ varies.
\end{proof}

\begin{remark}
Let us compare our estimates in formula~\r{fweq} with the first equality in formula \r{eq:pomega} (see \cite[Theorem~1]{br}.
Formula \r{eq:pomega} gives a quantitative expression for $\mathscr{P}^{\Reg}$. Even though Proposition~\ref{weq} only provides a weak equivalent of $\mathscr{P}^{\Reg}$, it allows to estimate directly whether $\mathscr{P}^{\Reg}(K\cap\Reg)$ is finite or not (see the discussion below Corollary~\ref{cococ} below for $\mu$-integrability of $d\mathscr{P}^\Reg/d\mu$). The finiteness of Popp's measure of such sets is somehow hidden in \r{eq:pomega} since it is encoded in the $n$-form $dY_1\wedge\dots\wedge dY_n$.
\end{remark}

Let $U \subset M$ be a connected component of the regular set. Recall that its Hausdorff dimension equals the constant value $Q$ of $Q(p)$ for $p\in U$. The following statement is a consequence of Proposition~\ref{weq} and of \cite[Proposition~3.12]{gjha}.
\begin{corollary}\label{cococ}
For any compact subset $K \subset M$ there exists a constant
$C>0$ such that, for every $q \in U \cap K$,
\begin{equation*}
\frac1C \leq \frac{d\mathscr{P}^U}{d\ss^{Q}}(q) \leq C.
\end{equation*}
\end{corollary}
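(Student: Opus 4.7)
The strategy is to pass through a common smooth volume and apply the chain rule for Radon--Nikodym derivatives. Since the statement is local and orientability holds on sufficiently small open sets, we may, up to covering $K$ by finitely many charts, fix a smooth volume $\mu$ defined on a neighbourhood of $K$ associated with a non-degenerate $n$-form $\omega$, and fix a (global) generating family $X_1,\dots,X_m$ of $(\bD,g)$. Let $\nu$ be the function built from these data as before Proposition~\ref{weq}.

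On $U$ both measures $\mathscr{P}^U$ and $\ss^Q$ are absolutely continuous with respect to $\mu$: this is Proposition~\ref{weq} for $\mathscr{P}^U=\mathscr{P}^{\Reg}\llcorner_U$, and it is property~(iii) recalled in Section~\ref{se:haus_intro} (together with Proposition~\ref{yui} for the pointwise density interpretation) for $\ss^Q$. Consequently the chain rule gives, $\ss^Q$-a.e.\ on $U$,
\begin{equation*}
\frac{d\mathscr{P}^U}{d\ss^Q}(q) \;=\; \frac{d\mathscr{P}^U/d\mu\,(q)}{d\ss^Q/d\mu\,(q)}.
\end{equation*}
Proposition~\ref{weq} controls the numerator: there is $C_1>0$ such that for every $q\in U\cap K$
\begin{equation*}
\frac{1}{C_1\,\nu(q)} \;\leq\; \frac{d\mathscr{P}^U}{d\mu}(q) \;\leq\; \frac{C_1}{\nu(q)}.
\end{equation*}
The cited \cite[Proposition~3.12]{gjha} provides precisely the analogous two-sided control of $d\ss^Q/d\mu$ in terms of $1/\nu$: there exists $C_2>0$ such that
\begin{equation*}
\frac{1}{C_2\,\nu(q)} \;\leq\; \frac{d\ss^Q}{d\mu}(q) \;\leq\; \frac{C_2}{\nu(q)}, \qquad q \in U\cap K.
\end{equation*}

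Dividing the two estimates, the factor $\nu(q)$ cancels and we obtain
\begin{equation*}
\frac{1}{C_1 C_2} \;\leq\; \frac{d\mathscr{P}^U}{d\ss^Q}(q) \;\leq\; C_1 C_2 \qquad \text{for every } q\in U\cap K,
\end{equation*}
which is the stated inequality with $C=C_1 C_2$. The only delicate point is that both bounds degenerate as $\nu(q)\to 0$ (which happens precisely when $q$ approaches the singular set $\Sing\cap K$, possibly nonempty in $K$), so it is essential that the \emph{same} weight $\nu(q)$ appears on both sides; this is why the comparison must be made through $\mu$ and $\nu$ rather than directly, and is the reason the compact $K$ need not be contained in $U$.
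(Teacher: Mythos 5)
Your proof is correct and follows exactly the route the paper intends: the paper gives no written proof but states that the corollary ``is a consequence of Proposition~\ref{weq} and of \cite[Proposition~3.12]{gjha}'', and your argument is precisely the natural way to combine these, dividing the two $1/\nu$-weighted two-sided estimates so that the degenerate factor cancels. Your closing remark about why the comparison must pass through $\mu$ and $\nu$ (since $K$ may meet the singular set) correctly identifies the point the paper itself emphasizes after the corollary.
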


Let us stress that the estimates in Proposition~\ref{weq} and Corollary~\ref{cococ} hold for any compact set in $M$ and, in particular, also for compact sets having singular points, i.e., not contained in $\Reg$. This is the main novelty with respect to known properties of $\mathscr{P}^\Reg$ for equiregular manifolds.

Indeed, assume $K$ contains singular points.
In this case, since $\nu(q)\rightarrow 0$ as $q$ tends to any singular point,
the function $d\mathscr{P}^\Reg/d\mu$ blows up and it is not $L_{loc}^\infty(\mu)$ in $K\cap\Reg$. Going further in the analysis of regularity of $d\mathscr{P}^\Reg/d\mu$, one may ask whether this function is $\mu$-integrable: it turns out that this may fail to be the case.
To see some sufficient conditions we refer the reader to sections 4 and 6 in \cite{gjha} where we study $\mu$-integrability of $\ss^{Q}\llcorner_\Reg$. Those conditions apply to $\mathscr{P}^\Reg$ as well, since by Corollary~\ref{cococ} $\mathscr{P}^U$ is commensurable with $\ss^{Q}\llcorner_U$, for any connected component $U\subset\Reg$ of Hausdorff dimension $Q$.

\begin{corollary}\label{ccor}
For any compact subset $K \subset M$ there exists a constant
$C>0$ such that
\begin{equation*}
C^{-1} \leq \mathscr{\widehat P}_q(\widehat B_q) \leq C, \textrm{ for every } q \in \Reg \cap K.
\end{equation*}
\end{corollary}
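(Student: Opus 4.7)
The plan is to derive Corollary~\ref{ccor} directly from the density formula
\[
\frac{d\mathscr{P}^{\Reg}}{d\ss^{Q(q)}}(q)=\frac{\widehat{\mathscr{P}}_q(\widehat B_q)}{2^{Q(q)}},\qquad q\in\Reg,
\]
already recorded above, together with the uniform bound supplied by Corollary~\ref{cococ}. Rearranging the density formula in the form
\[
\widehat{\mathscr{P}}_q(\widehat B_q) \;=\; 2^{Q(q)} \, \frac{d\mathscr{P}^{U_q}}{d\ss^{Q(q)}}(q),
\]
where $U_q$ denotes the connected component of $\Reg$ containing $q$ (so that $\mathscr{P}^{\Reg}$ and $\mathscr{P}^{U_q}$ agree there), one sees that the problem reduces to uniformly bounding both factors on the right-hand side over $\Reg\cap K$.

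First I would apply Corollary~\ref{cococ}: for the compact set $K$ it gives a constant $C_1>0$ such that $C_1^{-1}\leq \frac{d\mathscr{P}^{U_q}}{d\ss^{Q(q)}}(q)\leq C_1$ for every $q\in \Reg\cap K$. It then remains to bound $2^{Q(q)}$ from above and below on $\Reg\cap K$. This is elementary: since $Q(q)=\sum_{i=1}^n w_i(q)$ with $1\leq w_i(q)\leq r(q)\leq n$, we have the dimensional estimate $n\leq Q(q)\leq n^2$, hence $2^n\leq 2^{Q(q)}\leq 2^{n^2}$, where $n=\dim M$. Combining these two bounds yields
\[
\frac{2^n}{C_1}\leq \widehat{\mathscr{P}}_q(\widehat B_q)\leq C_1\,2^{n^2}\qquad\text{for every }q\in\Reg\cap K,
\]
and setting $C=\max(C_1\,2^{n^2},2^{-n}C_1)$ concludes the proof.

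The one point that deserves care is that Corollary~\ref{cococ} is formally stated for a single connected component $U$ of $\Reg$, with a constant that might a priori depend on $U$, whereas here we need a single constant valid on all of $\Reg\cap K$. This will be the main (but essentially cosmetic) obstacle. It is resolved by inspecting the proof of Corollary~\ref{cococ}: that proof relies on Proposition~\ref{weq} together with the corresponding estimate for $d\ss^Q/d\mu$ from \cite[Proposition~3.12]{gjha}, and the proof of Proposition~\ref{weq} produces a constant depending only on the finite collection of frames of brackets $\mathrm{Y}$ needed to cover $\Reg$ near $K$; it does not see which component of $\Reg$ contains $q$. Thus the same $C_1$ works uniformly over $\Reg\cap K$, and the estimate above is genuinely uniform, as required.
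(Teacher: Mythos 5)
Your argument is correct, but it takes a different (though closely related) route from the paper's. You combine the identity $\frac{d\mathscr{P}^{\Reg}}{d\ss^{Q(q)}}(q)=\widehat{\mathscr{P}}_q(\widehat B_q)/2^{Q(q)}$ with Corollary~\ref{cococ} and the elementary observation that $Q(q)$ is bounded on the regular set (indeed $n\leq Q(q)\leq n\,r(q)\leq n^2$ there, since the flag is strictly increasing at regular points — note this bound would fail at singular points, but that is irrelevant here). The paper instead works with a smooth volume $\mu$ as reference: it writes $\widehat{\mathscr{P}}_q(\widehat B_q)=\frac{d\mathscr{P}^{\Reg}}{d\mu}(q)\cdot\muq(\widehat B_q)$, bounds the first factor by $C/\nu(q)$ via Proposition~\ref{weq}, and the second by $C\nu(q)$ via \cite[Proposition~5.7]{gjha}, so the $\nu$'s cancel. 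The two proofs are reshufflings of the same underlying facts, but they lean on different external inputs: yours routes through Corollary~\ref{cococ} (hence \cite[Proposition~3.12]{gjha}), the paper's through \cite[Proposition~5.7]{gjha}. One genuine advantage of the paper's route is that Proposition~\ref{weq} and the cited estimate are stated uniformly over $\Reg\cap K$ from the outset, whereas Corollary~\ref{cococ} fixes a connected component $U$ of $\Reg$ in advance; since a compact $K$ may meet infinitely many components, your proof really does need the uniformity check you perform in the last paragraph. Your resolution — that the constants produced in the proofs of Proposition~\ref{weq} and Corollary~\ref{cococ} depend only on $K$ and the finite list of adapted bracket frames, not on the component — is sound, so the gap is closed; but be aware that this step is not "cosmetic" so much as a point where a literal reading of the stated corollary would not suffice.
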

\begin{proof}
Recall that, if $q$ is a regular point, $\frac{d\mathscr{P}^\Reg}{d\mu}(q)=\frac{\mathscr{\widehat P}_q(\widehat B_q)}{\muq(\widehat B_q)}$.
The statement follows directly by Proposition~\ref{weq} and by \cite[Proposition~5.7]{gjha} which implies that there is $C>0$ such that, if $q\in K\cap\Reg$,
\begin{equation*}
C^{-1}\nu(q)\leq \muq(\widehat B_q)\leq C\nu(q).
\end{equation*}
\end{proof}
Again, the relevance of Corollary~\ref{ccor} is that $q\mapsto \mathscr{\widehat P}_q(\widehat B_q)$ is bounded and bounded away from zero even when $q$ approaches the singular set.

\subsection{Popp's measure on the singular set}

What happens for the construction of Popp's measure near a singular point $p$? The element $\varpi_q=\theta_q^{-1}(\mathscr{V}_q)$ in $\Lambda^n T^*_qM $ is still defined canonically (up to a sign) at any point $q$ near $p$ (at least when $\bD$ is a distribution). However the function $q \mapsto \varpi_q$ is not smooth at $p$ since the graded vector space $\mathfrak{gr}_q(\bD)$ does not vary continuously near the singular point, thus it is not possible to define a volume form $\varpi$.  So Popp's measure is a well defined (canonical) smooth volume   only on equiregular manifolds.

To our knowledge, a notion in the non equiregular case has never been proposed in literature. We provide here a way of defining Popp's measure under a standing assumption on the structure of the set of singular points. The idea is to generalize Popp's measure on particular singular sets, the so-called equisingular submanifolds and assume that the singular set is stratified by those.

Let us recall the notion of equisingular submanifold, see \cite{noi2,gjha}.
\begin{definition}
Let $N\subset M$ be a smooth connected submanifold and $q \in N$. The \emph{flag  at $q$ of $\,\bD$ restricted to $N$} is the sequence of subspaces
\begin{equation} \label{flagnq}
\{0\} \subset (\bD_q^1\cap T_qN)\subset\dots\subset(\bD_q^{r(q)}\cap T_qN)=T_qN.
\end{equation}
Set
\begin{eqnarray}
& n_i^N(q)= \dim (\bD_q^i\cap T_qN)\qquad \hbox{and} \qquad
Q_N(q)=\sum_{i=1}^{r(q)}i(n^N_{i}(q)-n^N_{i-1}(q)).& \label{defqn}
\end{eqnarray}
 We say that
$N$ is {\it equisingular} if, for every $i$, both dimensions $n_i(q)=\dim \bD_q^i$ and $n_i^N(q)$ are constant as $q$ varies in $N$.
In this case, we denote by $Q_N$  the constant value of $Q_N(q)$.
\end{definition}

Note that an equisingular submanifold is open if and only if it is contained in the regular set. Otherwise, it is contained in the singular set, which motivates the name.
\begin{example}[Grushin plane]\label{gru} Let $M=\R^2$, $\bD=\span\{X_1,X_2\}$ where $X_1=\partial_{x_1}$ and $X_2=x_1\partial_{x_2}$, and $g$ be the metric obtained by declaring $\{X_1,X_2\}$ to be orthonormal.
For every $x=(x_1,x_2)$ with $x_1\neq 0$,  $n_1(x)=2$ and thus $\Reg=\{(x_1,x_2)\in\R^2\mid x_1\neq 0\}$. The set of singular points coincides with the vertical axis $N=\{(0,x_2)\mid x_2\in\R\}$   and for every $x\in N$ $n_1(x)=1, n_2(x)=2$, $n^N_1(x)=0$, $n_2^N(x)=1$. Hence, $N$ is equisingular and $Q_N=2$.
\end{example}
\begin{example}[Martinet space]\label{mar}
Let $M=\R^3$, $\bD=\span\{X_1,X_2\}$, where
\begin{equation*}
X_1=\partial_{x_1},\quad X_2=\partial_{x_2}+\frac{x_1^2}{2}\partial_{x_3},
\end{equation*}
and $g$ be the metric obtained by declaring $\{X_1,X_2\}$ to be orthonormal.
For every $x\in\R^3$ with $x_1\neq 0$, $n_1(x)=2, n_2(x)=3$ and $\Reg=\{x\in\R^3\mid x_1\neq 0\}$. The set of singular points is the plane $N=\{(0,x_2,x_3)\mid (x_2,x_3)\in\R^2\}$ and, for every $x\in N$, $n_1(x)=n_2(x)=2, n_3(x)=3$, whereas $n^N_1(x)=n^N_2(x)=1, n^N_3(x)=2$. Thus  $N$ is equisingular and $Q_N=4$.
\end{example}

In \cite{gjha} we  studied properties of the spherical measure\footnote{For every $\alpha\geq 0$, we denote by $\ss^\alpha_N$ the $\alpha$-dimensional spherical Hausdorff measure in the metric space $(N, d|_N)$, where $d$ is the sub-Riemannian distance on $M$.} $S^{Q_N}_N$ exploiting the algebraic structure associated with an equisingular submanifold $N$ as provided below.
\begin{lemma}[see {\cite[Proposition~5.1]{gjha}}]\label{5.1}
Let $N\subset M$ be an  equisingular submanifold.  Then $\dim_H N=Q_N$ and, for every $p\in N$,
the graded vector space
\begin{equation*}
 \mathfrak{gr}_p^N(\bD):= \bigoplus_{i=1}^{r(p)}(\bD^{i}_p\cap T_pN)/(\bD^{i-1}_p\cap T_pN)
 \end{equation*}
 is a nilpotent Lie algebra whose associated Lie group $\mathrm{Gr}_p^N(\bD)$ is diffeomorphic to $T_pN$. Moreover $\mathfrak{gr}_p^N(\bD)$ varies smoothly w.r.t.\ $p$ on $N$, i.e.\ $\mathfrak{gr}^N(\bD)$ defines a vector bundle on $N$.
\end{lemma}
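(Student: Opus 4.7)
The plan is to treat the three assertions separately, relying on the classical algebraic construction of the graded nilpotent Lie algebra in sub-Riemannian geometry, adapted to the restricted flag \eqref{flagnq} by using equisingularity in an essential way.

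\textbf{Lie algebra structure.} First I would define the bracket on $\mathfrak{gr}_p^N(\bD)$ by pulling back the Lie bracket of vector fields tangent to $N$. Given $v \in \bD^i_p \cap T_pN$ and $w \in \bD^j_p \cap T_pN$, equisingularity ensures that $\bD^i \cap TN$ is a smooth sub-bundle of $TN$ of constant rank $n_i^N$, so $v$ extends to a local vector field $V$ on $M$ that is tangent to $N$ and lies in $\bD^i$; similarly for $w$ to some $W \in \bD^j$. Since $V, W$ are tangent to $N$, the bracket $[V,W]$ is tangent to $N$ as well, and the classical inclusion $[\bD^i,\bD^j] \subset \bD^{i+j}$ yields $[V,W](p) \in \bD^{i+j}_p \cap T_pN$. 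Setting
\begin{equation*}
\bigl[v + \bD^{i-1}_p\cap T_pN,\; w + \bD^{j-1}_p\cap T_pN\bigr] := [V,W](p) + \bD^{i+j-1}_p\cap T_pN
\end{equation*}
is independent of the chosen extensions: replacing $V$ by $V+V'$ with $V'(p) \in \bD^{i-1}_p\cap T_pN$ and $V' \in \bD^{i-1}$ yields $[V',W] \in \bD^{i+j-1}$, whose value at $p$ is absorbed by the quotient. Antisymmetry and Jacobi are inherited from the Lie bracket of vector fields, the grading is preserved by construction, and nilpotency follows from the grading being truncated at $r(p)$.

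\textbf{Associated Lie group and smooth bundle structure.} Since $\mathfrak{gr}_p^N(\bD)$ is a finite-dimensional nilpotent Lie algebra, Baker--Campbell--Hausdorff endows its underlying vector space with a group structure via the exponential map, which is a global diffeomorphism. Because
\begin{equation*}
\dim \mathfrak{gr}_p^N(\bD) = \sum_{i=1}^{r(p)} \bigl(n_i^N(p) - n_{i-1}^N(p)\bigr) = \dim T_pN,
\end{equation*}
we get $\mathrm{Gr}_p^N(\bD)$ diffeomorphic to $T_pN$. Smoothness of the bundle structure on $N$ is then immediate: each $\bD^i \cap TN$ is a smooth sub-bundle of $TN$ by equisingularity, so the quotients and their direct sum $\mathfrak{gr}^N(\bD)$ are smooth vector bundles on $N$, and the structure constants of the bracket depend smoothly on $p$ since they come from Lie brackets of smooth extensions.

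\textbf{Hausdorff dimension.} For the equality $\dim_H N = Q_N$, the strategy is a Ball--Box estimate for submanifolds. I would choose privileged coordinates $(x_1,\dots,x_n)$ at $p$ adapted both to the flag of $\bD$ and to $N$, so that near $p$ one has $N = \{x_{k+1} = \cdots = x_n = 0\}$ with $k = \dim N$, and the weights of the first $k$ coordinates realize the weights $w_1^N \leq \cdots \leq w_k^N$ of the restricted flag (whence $\sum_{i=1}^k w_i^N = Q_N$). The classical Ball--Box theorem then gives a constant $C>0$ such that, in these coordinates,
\begin{equation*}
\prod_{i=1}^k [-C^{-1}\varepsilon^{w_i^N}, C^{-1}\varepsilon^{w_i^N}] \;\subset\; B(p,\varepsilon)\cap N \;\subset\; \prod_{i=1}^k [-C\varepsilon^{w_i^N}, C\varepsilon^{w_i^N}],
\end{equation*}
yielding $\Leb{k}(B(p,\varepsilon)\cap N) \asymp \varepsilon^{Q_N}$. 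Standard Vitali-type covering arguments in the metric space $(N, d|_N)$ then show that $\hh^\alpha(N\cap K)$ is finite for $\alpha = Q_N$ and $+\infty$ for $\alpha < Q_N$ on any compact $K \subset N$, giving $\dim_H N = Q_N$. The main obstacle is the construction of privileged coordinates simultaneously adapted to $\bD$ and to $N$; this is precisely where equisingularity is used in its full strength, and once this is achieved the remainder of the argument parallels Mitchell's theorem in the equiregular setting.
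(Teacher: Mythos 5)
A preliminary remark: the paper contains no proof of this lemma --- it is imported verbatim from \cite[Proposition~5.1]{gjha} --- so your attempt can only be measured against that source. Your overall architecture (graded bracket defined via extensions tangent to $N$, Baker--Campbell--Hausdorff for the group structure, smoothness of the bundle from the constancy of $n_i$ and $n_i^N$ along $N$, and a ball--box estimate on $N$ for $\dim_H N=Q_N$) is the correct one and is essentially the route of the cited reference.

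There is, however, one genuine gap, located in the well-definedness of the bracket. You verify invariance only under replacing $V$ by $V+V'$ with $V'\in\bD^{i-1}$, i.e.\ under one particular way of changing the representative of the class of $v$. You do not verify independence of the choice of \emph{extension} of a fixed $v$: two admissible extensions $V_1,V_2$ of the same $v$ differ by a field $V'=V_1-V_2$ which lies in $\bD^{i}$, is tangent to $N$ along $N$, and vanishes at $p$, but in general does \emph{not} lie in $\bD^{i-1}$, so the computation you give does not apply to it. This is precisely the point where equisingularity must do real work: since $n_i$ and $n_i^N$ are constant on $N$, the family $q\mapsto\bD^i_q\cap T_qN$ is a smooth constant-rank subbundle of $TN$ near $p$; choosing a curve $\gamma$ in $N$ with $\dot\gamma(0)=W(p)$ and a local frame $(e_k)$ of this subbundle, one writes $V'(\gamma(t))=\sum_k c_k(t)e_k(\gamma(t))$ with $c_k(0)=0$, so that $[V',W](p)$, which up to sign equals $\frac{d}{dt}\big|_{t=0}V'(\gamma(t))$ because $V'(p)=0$, lies in $\bD^i_p\cap T_pN\subset\bD^{i+j-1}_p\cap T_pN$ (using $j\geq1$). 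A similar extension lemma at level $i-1$ (again relying on the constancy of $n_{i-1}$ and $n_{i-1}^N$, which you should not treat as automatic) is needed to reduce the change-of-representative case to the situation you do handle. With these supplements the algebraic part of your argument closes. For the Hausdorff-dimension part, your sketch identifies the right mechanism, but the entire content is the \emph{uniform} (in $p$ over compact subsets of $N$) ball--box estimate for $B(p,\eps)\cap N$ in coordinates adapted simultaneously to the flag and to $N$; you state this but do not prove it, and without uniformity the covering argument does not deliver local finiteness and positivity of $\hh^{Q_N}$ on $N$.
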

Note that the Lie group $\mathrm{Gr}_p^N(\bD)$ is not a Carnot group in general since the graded Lie algebra $\mathfrak{gr}_p^N(\bD)$ may not be generated by its first homogeneous component.
\medskip

Let $N$ be an oriented equisingular submanifold of dimension $k$. Thanks to Lemma~\ref{5.1}, the construction of Popp's measure done previously on a connected component $U\subset\Reg$ can be extended on $N$ in the following way. Assume first that $\bD$ is a distribution on $M$.

\begin{enumerate}[(i)]
\iii As in the regular case, use the maps $\pi_i:\otimes^i\bD_p\to\bD_p^i/\bD^{i-1}_p$ given by \eqref{eq:pii} to define an inner product on $\mathfrak{gr}_p(\bD)$ for every $p\in N$.
\iii Since $\mathfrak{gr}_p^N(\bD)$ can be canonically identified with a linear subspace of $\mathfrak{gr}_p(\bD)$, we obtain by restriction an inner product on $\mathfrak{gr}_p^N(\bD)$ and thus canonically (up to a sign) an element $\mathscr{V}_p^N\in \Lambda^k\mathfrak{gr}_p^N(\bD)^*$ for every $p\in N$. Because $\mathfrak{gr}^N(\bD)$ is a vector bundle on $N$, $\mathscr{V}_p^N$ depends smoothly on $p \in N$.
\iii Apply Lemma~\ref{alg} b) to $E=T_pN$ and $F=\bD_p \cap T_pN$ to get a canonical isomorphism $\theta_p^N: \Lambda^kT^*_pN \to \Lambda^k\mathfrak{gr}^N_p(\bD)^*$.
\iii Set $\mathscr{P}^N$ as the measure associated with the smooth volume form $p \mapsto (\theta_p^N)^{-1}(\mathscr{V}^N_p)$ on $N$.
\end{enumerate}

When $\bD$ is not a distribution, that is when the sub-Riemannian manifold is defined by a sub-Riemannian structure $(\mathbf{U},\ps,f)$ on $M$ (see Definition~\ref{SR}), the maps $\pi_i$ are not well-defined. In this case we have to replace every $\pi_i$, $i=1,\dots, r(p)$, in point (i) by the map $\widetilde{\pi}_i:\otimes^i\mathbf{U}_p \to \bD_p^i/\bD^{i-1}_p$ given by
\begin{equation*}
\widetilde{\pi}_i(v_1\otimes\dots\otimes v_i)=[f(V_1),\dots,[f(V_{i-1}), f(V_i)]]](p) \mod \bD^{i-1}_p,
\end{equation*}
where $V_1,\dots, V_i$ are smooth sections of $\mathbf{U}$ satisfying $V_j(p)=v_j$. The rest of the construction is unchanged.

\begin{corollary}
Let $N$ be an oriented equisingular submanifold. Then $\mathscr{P}^N$ is a   smooth volume   on $N$.
\end{corollary}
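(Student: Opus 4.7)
The plan is to verify that the four-step construction (i)--(iv) produces a nowhere-vanishing smooth section of $\Lambda^k T^*N$, which is exactly what it means to be a smooth volume form on the oriented $k$-dimensional manifold $N$. Non-vanishing is essentially automatic: at each $p\in N$ the inner product on $\mathfrak{gr}_p^N(\bD)$ is positive definite (hence $\mathscr{V}^N_p\neq 0$ in $\Lambda^k\mathfrak{gr}_p^N(\bD)^*$), and $\theta_p^N$ is an isomorphism by Lemma~\ref{alg}b), so $(\theta_p^N)^{-1}(\mathscr{V}^N_p)\neq 0$ in $\Lambda^k T_p^*N$. The orientation assumption fixes the sign ambiguity globally by choosing the positively oriented representative. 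The real content of the proof is smoothness in $p$, for which I would go through the construction step by step and check that each operation is a smooth bundle operation over $N$.

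For step (i), the equisingularity of $N$ guarantees that $n_i(q)$ is constant for $q\in N$, so each $\bD^i_q$ and the quotient $\bD^i_q/\bD^{i-1}_q$ assemble into smooth vector bundles over $N$. The maps $\pi_i$ (or $\widetilde\pi_i$ in the generalized setting) are induced by smooth multilinear operations (iterated Lie brackets applied to smooth sections of $\bD$ or $\mathbf{U}$), hence they vary smoothly as morphisms of vector bundles over $N$. By Lemma~\ref{alg}a), the inner product on $\bD^i_q/\bD^{i-1}_q$ obtained via $\pi_i$ depends smoothly on $q$: it is the quotient of the smooth Euclidean structure on $\otimes^i \bD_q$ (resp.\ $\otimes^i \mathbf{U}_q$), and quotienting by a smooth vector subbundle preserves smoothness. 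Consequently the inner product on $\mathfrak{gr}_q(\bD)$ depends smoothly on $q\in N$.

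For step (ii), Lemma~\ref{5.1} tells us precisely that $\mathfrak{gr}^N(\bD)$ is a smooth vector bundle of rank $k$ over $N$, and the inclusion $\mathfrak{gr}^N_q(\bD) \hookrightarrow \mathfrak{gr}_q(\bD)$ is a smooth morphism of bundles (this is where equisingularity of the restricted flag is essential). Restriction of a smoothly varying inner product to a smooth subbundle is again smoothly varying, and taking the wedge product of any orthonormal dual frame (adjusted so that the sign matches the chosen orientation) yields a smooth section $p\mapsto \mathscr{V}^N_p$ of $\Lambda^k(\mathfrak{gr}^N(\bD))^*$. For step (iii), the canonical isomorphism $\theta_p^N$ of Lemma~\ref{alg}b) is constructed purely from the flag \r{flagnq}; since all the $n_i^N(q)$ are constant on $N$ and the subspaces $\bD^i_q\cap T_qN$ form smooth subbundles of $TN$, the family $\{\theta_p^N\}_{p\in N}$ is a smooth bundle isomorphism $\Lambda^kT^*N \to \Lambda^k(\mathfrak{gr}^N(\bD))^*$. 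Applying its smooth inverse to the smooth section $\mathscr{V}^N$ produces a smooth, nowhere-vanishing $k$-form on $N$, which after fixing the orientation is the desired volume form defining $\mathscr{P}^N$.

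The only delicate point, and what I would consider the main obstacle, is justifying that the restriction in step (ii) preserves smoothness \emph{at points where the ambient graded bundle $\mathfrak{gr}(\bD)$ itself is not smoothly defined}: indeed, if $N$ consists of singular points of $(M,\bD,g)$ then the dimensions $n_i(q)$ jump as one leaves $N$, so $\mathfrak{gr}_q(\bD)$ does not extend smoothly off $N$. The resolution is that every operation in steps (i)--(iii) takes place \emph{intrinsically on $N$}: we only ever use the values of $\bD^i_q$ for $q\in N$, which do form smooth bundles over $N$ by the constancy of $n_i$ on the equisingular submanifold. Once this is acknowledged, the argument proceeds as above and completes the proof.
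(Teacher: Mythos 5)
Your proof is correct and follows essentially the same route as the paper, which states the corollary as an immediate consequence of the four-step construction together with Lemma~\ref{5.1}; you have simply made explicit the smoothness and non-vanishing checks (constancy of both $n_i$ and $n_i^N$ on $N$, smoothness of the quotient inner products, of the inclusion $\mathfrak{gr}^N_q(\bD)\hookrightarrow\mathfrak{gr}_q(\bD)$, and of $\theta^N_q$) that the paper leaves implicit. Your remark that all operations are intrinsic to $N$ and never require $\mathfrak{gr}_q(\bD)$ to extend smoothly off $N$ correctly identifies the point that makes the construction work on a singular stratum.
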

Note that when $N$ is an open connected subset $U$ of the regular set, the above construction  of $\mathscr{P}^N$ coincides with the one of $\mathscr{P}^U$ given at the beginning of this section. \medskip

We have now all the ingredients to define a global measure on $M$.
We make the assumption that the singular set $\Sing = M \setminus \Reg$  is \emph{stratified by equisingular submanifolds}, that is,   $\Sing$ is a countable union $\cup_i\Sing_i$ of disjoint equisingular submanifolds $\Sing_i$. Note that the regular set $\Reg$ always admits such a stratification as the union of its connected components $\Reg=\cup_i \Reg_i$, every $\Reg_i$ being an open equisingular submanifold, so that the whole $M$ is stratified by equisingular submanifolds. Since for every $i$, $\dim_H \Sing_i= Q_{\Sing_i}$ and $\dim_H \Reg_i = Q_{\Reg_i}$ (which is the constant value of $Q(q)$ on $\Reg_i$), there holds
\begin{equation*}
\dim_H \Reg = \sup_i Q_{\Reg_i}, \quad \dim_H \Sing = \sup_i Q_{\Sing_i}, \quad \dim_H M = \max \{\dim_H \Reg, \dim_H \Sing \}.
\end{equation*}
Note that in a non compact manifold $M$ it may happen that the integers $Q_{\Sing_i}$ are unbounded, and so that $\dim_H M = \infty$. We assume here that it is not the case.

\begin{definition}
Let $(M,\bD, g)$ be an oriented sub-Riemannian manifold stratified by equisingular submanifolds $M=\cup_i N_i$. We define the measures $\mathscr{P}_1$ and $\mathscr{P}_2$ on $M$ by
  \begin{eqnarray*}
 && \mathscr{P}_1 \llcorner_{N_i}=
  \begin{cases}\mathscr{P}^{N_i}, &\textrm{ if }Q_{N_i}=\dim_H M,\\
  0, &\textrm{ otherwise};
\end{cases}\\[2mm]
  &&\mathscr{P}_2\llcorner_\Reg = \mathscr{P}^{\Reg}, \mathscr{P}_2\llcorner_\Sing = 0.
  \end{eqnarray*}
In other terms, with a little abuse\footnote{we keep the notation $\mathscr{P}^{N_i}$ for the measure obtained by extending $\mathscr{P}^{N_i}$ so that it vanishes identically outside $N_i$. } of notations,
\begin{equation*}
\mathscr{P}_1 = \sum_{i \in {\mathcal J}}\mathscr{P}^{N_i} \qquad \hbox{and} \qquad \mathscr{P}_2 = \mathscr{P}^{\Reg}=\sum_{i}\mathscr{P}^{\Reg_i},
\end{equation*}
where ${\mathcal J}=\{i \mid Q_{N_i}=\dim_H M\}$.
\end{definition}

The first measure $\mathscr{P}_1$ has two main properties: it takes account of the singular set and it is absolutely continuous with respect to the Hausdorff measure $\ss^{\dim_HM}$ (indeed every $\mathscr{P}^{N_i}$ is a smooth volume on $N_i$ and so absolutely continuous w.r.t.\ $\ss^{Q_{N_i}}$ by \cite[Theorem~5.3]{gjha}). More precisely it charges all the submanifolds $N_i$ having the same Hausdorff dimension as $M$. For instance,
in Example~\ref{gru} the singular set $\Sing$ is equisingular and has Hausdorff dimension $2$, as the regular set, so that $\mathscr{P}_1=\mathscr{P}^\Sing+ \mathscr{P}^{\Reg}$. The same holds for the Martinet space, see Example~\ref{mar}.

The second measure $\mathscr{P}_2$ is obtained by simply charging zero mass to the singular set. Under the stratification assumption, the singular set is $\mu$-negligible  for every smooth volume  $\mu$ on $M$ (recall that the singular set always has an empty interior and so no open equisingular strata). As a consequence, thanks to Proposition~\ref{weq}, $\mathscr{P}_2$ is absolutely continuous with respect to any smooth volume   $\mu$ on $M$.

However neither $\mathscr{P}_1$ nor $\mathscr{P}_2$ have a $L^\infty$ density with respect to a smooth volume $\mu$.
\medskip

Finally, note that on every equisingular submanifold $N$ we have a weak estimate of $\mathscr{P}^N$ similar to the one of Proposition~\ref{weq}.
\begin{proposition}\label{weqN}
Let $M$ be an $n$-dimensional oriented manifold with a volume form $\omega$, and
 $N \subset M$ be a $k$-dimensional oriented equisingular submanifold with a volume form $\bar \omega$. We denote by $\bar \mu$ the associated smooth volume on $N$. Finally, let $X_1,\dots, X_{m}$ be a generating family of the sub-Riemannian structure $(\bD, g)$ on $M$.

For any compact subset $K \subset M$ there exists a constant
$C>0$ such that, for every $q \in N \cap K$,
\begin{equation*}
 \frac{C^{-1}}{\bar \nu(q)} \leq \frac{d\mathscr{P}^{N}}{d\bar \mu}(q) \leq  \frac{C}{\bar \nu(q)},
\end{equation*}
where ${\bar \nu(q)} = \max\{ \left(\bar \omega\wedge dY_{k+1} \wedge \cdots
  \wedge dY_n\right)_q (Y_1(q),\dots,Y_n(q)) \}$, the
maximum being taken among all $n$-tuples $(Y_1,\dots, Y_n)$ which are frames of brackets adapted at $q$ maximizing $\omega_q(Y_1(q),\dots,Y_n(q))$ (i.e.\ so that $\omega_q(Y_1(q),\dots,Y_n(q))=\nu(q)$).
\end{proposition}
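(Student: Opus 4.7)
Following the blueprint of Proposition~\ref{weq}, I would reduce to a local computation in a carefully chosen frame of brackets and then compare the resulting density with $\bar\nu(q)$. First, for a fixed $q\in N$, by equisingularity of $N$ one can select $k$ iterated brackets $Y_1,\dots,Y_k$ of the generating family $(X_1,\dots,X_m)$ whose values at $q$ form a basis of $T_qN$ adapted to the restricted flag \eqref{flagnq}, with $\sum_{i\leq k}\mathrm{length}(Y_i)=Q_N$, and which remain tangent to $N$ in a neighborhood of $q$ on $N$. One then completes them to a frame of brackets $(Y_1,\dots,Y_n)$ adapted at $q$ in $M$ with $\sum_{i=1}^n\mathrm{length}(Y_i)=Q(q)$, keeping only completions that achieve $\omega_q(Y_1(q),\dots,Y_n(q))=\nu(q)$. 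Since only finitely many families of iterated brackets can play this role as $q$ varies in $N\cap K$, this reduces the estimate to a local bound on the open subset of $N$ where a fixed admissible family is in use.

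On such a subset, the plan is to produce an explicit expression for $\mathscr{P}^N$ analogous to the equiregular formula of \cite[Theorem~1]{br}. Since $\mathfrak{gr}^N(\bD)$ is a smoothly varying nilpotent Lie algebra (Lemma~\ref{5.1}) and its inner product is inherited from the ambient $\mathfrak{gr}(\bD)$ by restriction, the same level-by-level construction of a graded orthonormal basis should yield
\begin{equation*}
\mathscr{P}^N=\frac{1}{\sqrt{\prod_j\det B_j^N}}\,dY_1|_N\wedge\cdots\wedge dY_k|_N,
\end{equation*}
where $dY_i|_N$ denotes the restriction to $TN$ of the $1$-form dual to $Y_i$ in $(Y_1,\dots,Y_n)$. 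I expect the matrices $B_j^N=(b_j^N)^Tb_j^N$ to inherit the block-identity structure of Brigant--Rizzi's matrices, so that $\sqrt{\prod_j\det B_j^N}$ is bounded above and below by positive continuous functions on compacts.

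Next, expressing $\bar\omega$ in the local frame of $N$ gives
\begin{equation*}
\frac{d\mathscr{P}^N}{d\bar\mu}(q)=\frac{1}{\bar\omega_q(Y_1(q),\dots,Y_k(q))\sqrt{\prod_j\det B_j^N(q)}}.
\end{equation*}
A direct wedge computation, using $dY_j(Y_i)=\delta_{ij}$ together with the tangency of $(Y_1,\dots,Y_k)$ to $N$, shows that
\begin{equation*}
(\bar\omega\wedge dY_{k+1}\wedge\cdots\wedge dY_n)_q(Y_1(q),\dots,Y_n(q))=\bar\omega_q(Y_1(q),\dots,Y_k(q)),
\end{equation*}
a value that is independent of any extension of $\bar\omega$ to $M$. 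Maximizing over the finitely many admissible frames identifies the denominator with $\bar\nu(q)$ up to the bounded factor from the previous paragraph, and patching these local estimates yields the uniform constant $C$ on $K\cap N$.

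The main obstacle lies in justifying the explicit formula in the second paragraph. One must verify that the Brigant--Rizzi derivation, originally phrased for equiregular sub-Riemannian manifolds where $\mathfrak{gr}(\bD)$ is a stratified (Carnot) Lie algebra, carries over to the restricted graded algebra $\mathfrak{gr}^N(\bD)$, which in general is only nilpotent and not generated by its weight-one component. The key check is that once $Y_1,\dots,Y_k$ are chosen adapted to the restricted flag, the structure constants involving these brackets still produce matrices with the block-identity decomposition of \cite{br}, so that the uniform bounds on $\sqrt{\prod_j\det B_j^N}$ persist.
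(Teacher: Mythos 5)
The paper gives no proof of Proposition~\ref{weqN}; it is left as the analogue of Proposition~\ref{weq}, so your overall strategy of rerunning that argument with $\mathfrak{gr}^N(\bD)$ in place of $\mathfrak{gr}(\bD)$ is certainly the intended one. However, your opening reduction rests on a claim that is false in general, and it is precisely the point where the peculiar form of $\bar\nu$ has to be confronted. You assume one can select $k$ iterated brackets $Y_1,\dots,Y_k$ of the generating family whose values at $q$ form a basis of $T_qN$ adapted to the restricted flag \eqref{flagnq}, and which moreover stay tangent to $N$ near $q$. The subspaces $\bD^i_q\cap T_qN$ need not contain any nonzero bracket value: the bracket values form a finite subset of $\bD^i_q$, while the intersection with $T_qN$ is in general in ``tilted'' position relative to all of them, so an adapted basis of $T_qN$ is only available as \emph{linear combinations} of bracket values. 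For instance, for $X_1=\partial_1$, $X_2=\partial_2+\tfrac12(x_1+x_2)^2\partial_3$ on $\R^3$ (a tilted Martinet structure), the singular plane $N=\{x_1+x_2=0\}$ is equisingular with $\bD_q\cap T_qN=\span\{\partial_1-\partial_2\}=\span\{X_1(q)-X_2(q)\}$, which contains neither $X_1(q)$ nor $X_2(q)$. This is exactly why $\bar\nu$ is defined through the full ambient frame and the wedge $\bar\omega\wedge dY_{k+1}\wedge\cdots\wedge dY_n$: that expression evaluates $\bar\omega$ on the $T_qN$-components of $Y_1(q),\dots,Y_k(q)$ relative to the splitting determined by the frame, i.e.\ it already encodes the change of basis from the ambient adapted frame to an adapted basis of $T_qN$. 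Under your tangency assumption the wedge collapses to $\bar\omega_q(Y_1(q),\dots,Y_k(q))$ and the ambient frame becomes irrelevant, which should have been a warning sign. The missing step is the linear algebra expressing an adapted basis of the restricted flag as combinations of the $Y_i(q)$ with coefficients controlled uniformly on $N\cap K$; this is where the maximality condition $\omega_q(Y_1(q),\dots,Y_n(q))=\nu(q)$ is actually used (the same mechanism as in \cite[Proposition~5.7 and Remark~5.9]{gjha}).

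The obstacle you do flag --- extending \cite[Theorem~1]{br} to $\mathfrak{gr}^N(\bD)$, which is nilpotent but not stratified --- is real but more benign, because $\mathscr{P}^N$ is built by \emph{restricting} the Popp inner product of $\mathfrak{gr}_q(\bD)$ to the subspace $\mathfrak{gr}_q^N(\bD)$. One therefore does not need a Carnot-type formula for the restricted algebra: it suffices to bound, above and below on compacts, the Gram determinant of an adapted basis of $\mathfrak{gr}_q^N(\bD)$ for this restricted inner product, and this follows from the two-sided bounds on the ambient matrices $B_j$ already established in the proof of Proposition~\ref{weq}, combined with the controlled change of basis from the previous paragraph. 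As written, though, your block-identity claim for $B_j^N$ again presupposes that the adapted basis of $T_qN$ consists of pure brackets, so it falls together with the first step; once the change-of-basis determinant is reinstated, the maximization over admissible frames in the definition of $\bar\nu$ is what absorbs it.
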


Using \cite[Remark 5.9]{gjha}, we obtain as a corollary that  $\mathscr{P}^N$ is commensurable with the Hausdorff measure $\ss^{Q_N}\llcorner_N$ on $N$.
\begin{corollary}
For any compact subset $K \subset M$ there exists a constant
$C>0$ such that, for every $q \in N \cap K$,
\begin{equation*}
\frac{1}{C}\leq \frac{d\mathscr{P}^N}{d\ss^{Q_N}\llcorner_N}(q) \leq C.
\end{equation*}
\end{corollary}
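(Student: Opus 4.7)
The plan is to mirror the argument of Proposition~\ref{weq}, replacing the explicit formula \eqref{eq:pomega} of \cite{br} for $\mathscr{P}^\Reg$ by an analogue for $\mathscr{P}^N$ on the equisingular submanifold $N$. Since $N$ is equisingular, both $n_i(q)$ and $n_i^N(q)$ are constant on $N$, so only finitely many ordered families $\mathrm{Y}=(Y_1,\dots,Y_n)$ of iterated Lie brackets of $X_1,\dots,X_m$ can appear as frames of brackets adapted at points of $N$. For each such $\mathrm{Y}$, I would introduce the relatively open set $U_\mathrm{Y}\subset N$ of points $q\in N$ at which (i) $\mathrm{Y}$ is a frame of brackets adapted at $q$, (ii) $Y_1,\dots,Y_k$ are tangent to $N$ and form a frame of brackets adapted to the restricted flag \eqref{flagnq} at $q$, (iii) $\omega_q(Y_1,\dots,Y_n)>\nu(q)/2$, and (iv) $\bar\omega_q(Y_1,\dots,Y_k)>\bar\nu(q)/2$. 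These sets are open and their union covers $N$ by equisingularity and by the definitions of $\nu$ and $\bar\nu$.

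On $U_\mathrm{Y}$, I would derive the analogue of \eqref{eq:pomega} for $\mathscr{P}^N$. The construction recalled in Section~\ref{se:popp} endows $\mathfrak{gr}^N_q(\bD)$ with the inner product inherited from $\mathfrak{gr}_q(\bD)$, and the sub-family $(Y_1,\dots,Y_k)$ projects onto a generating set of $\mathfrak{gr}^N_q(\bD)$, with $Y_i$ landing in the $j$-th layer precisely when $w_i^N(q)=j$. Running the algebraic computation of \cite[Theorem 1]{br} in this restricted setting yields
\[
\mathscr{P}^N \;=\; \frac{1}{\sqrt{\prod_j \det \bar B_j}}\; dY_1|_N\wedge\cdots\wedge dY_k|_N,
\]
where $\bar B_j(q)$ is the Gram matrix, with respect to the inner product induced by $\pi_j$, of the projections into $\bD^j_q/\bD^{j-1}_q$ of those $Y_i$, $1\le i\le k$, of bracket-length $j$. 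Since these $Y_i$ are themselves iterated Lie brackets of $X_1,\dots,X_m$, the same block argument as in the proof of Proposition~\ref{weq} gives $\bar B_j=\bar b_j^T \bar b_j$ with $\bar b_j(q) = (I,\, D_j(q))^T$ depending smoothly on $q$, so there is a continuous nonnegative function $\bar b$ on $U_\mathrm{Y}$ with $(1+\bar b(q))^{-1}\le (\prod_j\det\bar B_j)^{-1/2}\le 1$.

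To conclude, I would write $\bar\omega=\bar\omega_q(Y_1,\dots,Y_k)\,dY_1|_N\wedge\cdots\wedge dY_k|_N$ on $U_\mathrm{Y}$, so that
\[
\frac{d\mathscr{P}^N}{d\bar\mu}(q)=\frac{1}{\bar\omega_q(Y_1,\dots,Y_k)\sqrt{\prod_j\det\bar B_j(q)}}.
\]
Combining this identity with (iv) and the bounds on $\det \bar B_j$ yields the two-sided estimate on $U_\mathrm{Y}\cap K$, and the uniform constant $C$ then follows from the compactness of $N\cap K$ together with the finiteness of the cover by the $U_\mathrm{Y}$.

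The main obstacle is the derivation in the second paragraph: honestly extending \cite[Theorem 1]{br} to an equisingular submanifold. This requires a careful bookkeeping to identify which iterated brackets project into the restricted layers $(\bD^j\cap T_qN)/(\bD^{j-1}\cap T_qN)$, and to verify that the block-triangular form of $\bar b_j$ survives the passage from the ambient graded Lie algebra to its Lie subalgebra $\mathfrak{gr}^N(\bD)$, which is in general not Carnot. Once that algebraic step is in place, the rest of the argument is a direct translation of the proof of Proposition~\ref{weq}.
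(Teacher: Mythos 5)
There is a genuine gap: your argument establishes (modulo the honest extension of the Barilari--Rizzi formula, which is indeed the content of Proposition~\ref{weqN}) a two-sided estimate of $\frac{d\mathscr{P}^N}{d\bar\mu}(q)$ in terms of $1/\bar\nu(q)$, i.e.\ a comparison of $\mathscr{P}^N$ with the \emph{smooth volume} $\bar\mu$ on $N$. But the corollary compares $\mathscr{P}^N$ with the \emph{spherical Hausdorff measure} $\ss^{Q_N}\llcorner_N$, and nowhere in your proof does $\ss^{Q_N}\llcorner_N$ appear. The missing ingredient is precisely the second half of the paper's argument: by \cite[Theorem~5.3 and Remark~5.9]{gjha}, on an equisingular submanifold the derivative $\frac{d\ss^{Q_N}\llcorner_N}{d\bar\mu}(q)$ is itself comparable, uniformly on $N\cap K$, to $1/\bar\nu(q)$ (equivalently, to $1/\hat{\bar\mu}^q(\widehat B_q \cap T_qN)$). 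Only by dividing your estimate by that one do the factors $\bar\nu(q)$ cancel and produce a constant two-sided bound for $\frac{d\mathscr{P}^N}{d\ss^{Q_N}\llcorner_N}$. Without this input the conclusion does not follow, because $\bar\nu(q)$ is not bounded away from zero on $N\cap K$ in general: $K$ is compact in $M$ but $N$ need not be closed in $M$, so $N\cap K$ need not be compact in $N$, and $\bar\nu(q)$ can degenerate as $q$ approaches $\overline N\setminus N$. This also invalidates your final sentence, which appeals to ``the compactness of $N\cap K$'' to get a uniform constant.

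A secondary, smaller issue: the quantity you call $\bar\nu$ in condition (iv), namely $\bar\omega_q(Y_1,\dots,Y_k)$ for the sub-family tangent to $N$, is not literally the $\bar\nu$ of Proposition~\ref{weqN}, which is defined as a maximum of $(\bar\omega\wedge dY_{k+1}\wedge\cdots\wedge dY_n)_q(Y_1(q),\dots,Y_n(q))$ over adapted frames maximizing the ambient form $\omega_q(Y_1(q),\dots,Y_n(q))$; in particular the $Y_1,\dots,Y_k$ entering the definition need not be tangent to $N$, and the matching with \cite[Remark~5.9]{gjha} depends on using the correct normalization. The paper's route is simply: Proposition~\ref{weqN} (which it states separately, and which your second and third paragraphs essentially re-derive) combined with \cite[Remark~5.9]{gjha}; your proposal reproduces the first half and omits the second.
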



\end{document}